\documentclass[12pt]{amsart}

\usepackage{palatino}
\usepackage{amsmath,amssymb,amsthm,amscd, amsxtra, amsfonts,graphicx,fancybox,comment}
\usepackage[all]{xy}
\usepackage{amssymb}
\usepackage{latexsym}
\usepackage{amscd}
\usepackage{color}
\input amssym.def
\input amssym

\newtheorem{theorem}{Theorem}[section]

\newtheorem{lemma}[theorem]{Lemma}
\newtheorem{proposition}[theorem]{Proposition}
\newtheorem{remark}[theorem]{Remark}
\newtheorem{definition}[theorem]{Definition}

\newtheorem{conjecture}[theorem]{Conjecture}

\newcommand{\triplet}{\mathcal{W}(p)}
\newcommand{\singlet}{\overline{M(1)} }
\pagestyle{myheadings}

\newcommand{\bea}{\begin{eqnarray}}

\newcommand{\eea}{\end{eqnarray}}

\newcommand{\N}{\Bbb N}
\newcommand{\Z}{\mathbb{Z}}

\setlength{\parindent}{0in}

\pagestyle{myheadings}
\begin{document}

\title[Applications and constructions of intertwining operators]{Some applications and constructions of intertwining operators in LCFT}

\author{Dra\v{z}en Adamovi\'c and Antun Milas}
\address{Department of Mathematics, University of Zagreb, Croatia}
\email{adamovic@math.hr}

\address{Department of Mathematics and Statistics,
University at Albany (SUNY), Albany, NY 12222}
\email{amilas@math.albany.edu}
\thanks{A.M. was partially supported by a Simons Foundation grant ($\#$ 317908) \\ D.A. is  partially supported by the Croatian Science Foundation under the project 2634 }

\begin{abstract}
We discuss some applications of fusion rules and intertwining operators in the representation theory of cyclic orbifolds of the triplet vertex operator algebra. We prove that the classification of irreducible modules for the orbifold vertex algebra $\triplet^{A_m}$ follows from a conjectural fusion rules formula for the singlet vertex algebra modules. In the $p=2$ case, we computed fusion rules for the irreducible singlet vertex algebra modules by using intertwining operators. This result implies the classification of irreducible modules for $\mathcal{W}(2)^{A_m}$, conjectured previously in \cite{ALM-CCM}. The main technical tool is a new deformed realization of the triplet and singlet vertex algebras, which is used to construct certain intertwining operators that can not be detected by using standard free field realizations. 
\end{abstract}

\maketitle

\section{Introduction}

The orbifold theory with respect to a finite group of automorphisms is an important part of vertex algebra theory. 
 There is a vast literature on this subject in the case of familiar rational vertex algebras, including lattice vertex algebras.
It is widely believed that all irreducible representations of the orbifold algebra $V^G$ ($G$ is finite) arise from the ordinary and $g$-twisted $V$-modules via restriction to the subalgebra.
This is known to hold if $V^G$ is regular  \cite{Dong}. In fact, one reason for a great success of orbifold theory in the regular case 
comes from the underlying modular tensor category structure (e.g., quantum dimensions, Verlinde formula, etc.).

For irrational $C_2$-cofinite vertex algebra this theory is much less developed and only a few known examples have been studied, such as the $\mathbb{Z}_2$-orbifold
of the symplectic fermion vertex operator superalgebra \cite{Abe} and ADE-type orbifolds for the triplet vertex algebra $\triplet$. As we had demonstrated in \cite{ALM-CCM}-\cite{ALM-SIGMA} (jointly with X.Lin), for the triplet vertex algebra, we have a very strong evidence that all irreducible modules for $\triplet^G$ arise in this way.  Let us briefly summarize the main results obtained in those papers.
For the A-type (cyclic) orbifolds, based on the analysis of the Zhu algebra, we obtain classification of irreducible modules 
for $\triplet^{A_2}$ for small $p$ \cite{ALM-CCM}. After that, in \cite{ALM-IJM}, we studied the $D$-series (dihedral) orbifolds of $\triplet$. Among many results, we classified irreducible 
$\triplet^{D_2}$-modules again for small $p$. Finally, in \cite{ALM-SIGMA} we reduced the classification for all $A$ and $D$-type orbifolds
to a combinatorial problem related to certain constant term identities. However, these constant term identities are very difficult to prove even for 
moderately small values of $p$.

In this paper we revisited the classification of modules for the cyclic orbifolds by using a different circle of ideas. Instead of working with the Zhu 
algebra we employ the singlet vertex subalgebra to classify irreducible $\triplet^{A_m}$-modules. Here is the content of the paper with the main results.

In Section 2, we review several basic notions pertaining to singlet, doublet and triplet vertex algebras, including $A_n$-orbifolds. After that, we prove that the 
classification problem for $\triplet^{A_n}$, as conjectured in \cite{ALM-CCM}, follows from a certain fusion rules conjecture for 
the singlet vertex algebra module and simple currents.  In Section 3, we study fusion rules for the singlet vertex algebra for $p=2$. By using Zhu's algebra we obtain an upper bound on the fusion rules of typical modules. In Section 4, which is the most original part of the paper, we obtained a new realization of the doublet (and thus of the singlet and triplet) vertex algebra for $p=2$ and $p=3$. We also have a conjectural realization for all $p \geq 4$.  Finally, in Section 5, by using the construction 
in Section 4, we construct all intertwining operators among singlet modules when $p=2$. Combined with the previously obtained upper bound for generic modules, this proves that 
some special singlet algebra modules are simple currents, in a suitable sense. Using these results, we prove the completeness of classification of irreducible modules for the orbifold 
algebra $\mathcal{W}(2)^{A_m}$.

\smallskip


\section{ Classification of irreducible modules for orbifold $\triplet ^{A_m}$:  A fusion rules approach}

Let $L=\mathbb{Z}\alpha$ be a~rank one lattice with $\langle\alpha,\alpha\rangle=2p$ ($p\geq1$).
Let
\begin{gather*}
V_{L}=\mathcal{U}(\widehat{\mathfrak{h}}_{<0})\otimes\mathbb{C}[{L}]
\end{gather*}
denote the corresponding lattice vertex  algebra~\cite{LL}, where $\mathfrak{\widehat{h}}$ is the af\/f\/inization of
$\mathfrak{h}=\mathbb{C}\alpha$, and $\mathbb{C}[{L}]$ is the group algebra of~$L$.
Let $M(1)$ be the Heisenberg vertex subalgebra of $V_L$ generated by $\alpha(-1) \textbf{1}$
with the conformal vector
\begin{gather*}
\omega=\frac{\alpha(-1)^{2}}{4p}\textbf{1}+\frac{p-1}{2p}\alpha(-2)\textbf{1}.
\end{gather*}
With this choice of $\omega$, $V_{L}$ has a~vertex operator algebra structure of central charge
\begin{gather*}
c_{1,p}=1-\frac{6(p-1)^{2}}{p}.
\end{gather*}
Let  $Q:={\rm Res}_x Y(e^\alpha,x)=e^{\alpha}_0$,  where we
use $e^\beta$ to denote vectors in the group algebra of the dual lattice of~$L$.
The triplet vertex algebra $\mathcal{W}(p)$ 
 is strongly generated by~$\omega$ and three primary vectors
\begin{gather*}
F=e^{-\alpha}, \qquad  H=Q F, \qquad  E=Q^2 F
\end{gather*}
of conformal weight $2 p-1$ \cite{AdM-2008}. We use $H(n):={\rm Res}_x x^{2p-2+n} Y(H,x)$

The doublet vertex algebra $\mathcal A(p)$ (see \cite{FFT, AdM-2013}) is a generalized vertex algebra strongly generated by $\omega$ and
$$ {\overline a} ^- = e^{-\alpha /2}, \ \   {\overline a}^  +=  Q e^{-\alpha /2}. $$
We use $\overline{M(1)}$ to denote the singlet vertex algebra.
It is defined as a vertex subalgebra of $\mathcal{W}(p)$ generated by~$\omega$ and~$H$.

It was proved in \cite{A-2003} that the irreducible $\N$--graded $\singlet$--modules are parametrized  by highest weights with respect to $(L(0), H(0))$ which has the form
$$ \lambda_t :=\left(\frac{1}{4p} t (t+ 2p-2), {t \choose 2p-1}\right). $$
The irreducible  $\singlet$-module $M_t$  with highest weight $\lambda_t$ is realized as an irreducible subquotient of the $M(1)$--module $$M(1, \frac{t}{2p} \alpha):= M(1) \otimes e^{\tfrac{t}{2p} \alpha} . $$

For $n \in {\N}$, we define:
  $$ \pi_n :=M_{-np}=  \overline{M(1)}. e ^{- n \alpha /2} \quad \mbox{ and } \quad \pi_{-n} := M_{np}= \overline{M(1)}.  Q ^{n} e ^{-n \alpha /2}. $$
  We know \cite{AdM-2008} that $\pi_j $, $j \in {\Z}$, are irreducible as $\overline{M(1)}$--modules and $\pi_j \subset M(1, - j \alpha /2 )= M(1) \otimes e ^{- j \alpha /2}. $
  
Recall that the vertex algebra $\triplet ^{A_m}$ is a $\mathbb{Z}_m$-orbifold of $\triplet$ and is generated by
$$ \omega, F^{(m)} = e ^{-m\alpha}, \ E^{(m)} = Q^{2m } F^{(m)}, H= Qe^{-\alpha};$$
for details see \cite{ALM-CCM}.
Therefore  $\triplet ^{A_m}$ is an extension of the singlet algebra and we have:
$$ \triplet ^{A_m} = \bigoplus _{n \in \Z}  \pi_{ 2 n m}. $$

\begin{theorem} \label{class-alm1} \cite{ALM-CCM} For every $0 \le i \le 2pm^2  -1$, there exists a unique (up to equivalence) irreducible $\triplet ^{A_m}$--module $L_{\frac{i}{m}}$ which decomposes as  the following  direct sum of  $\singlet$--modules
$$ L _{ \tfrac{i}{m} }  = \bigoplus_{ n \in {\Z} } M_{ \tfrac{i}{m}  - 2 p m n} . $$
Moreover, $ L_{ \tfrac{i}{m} } $ is realized as an irreducible subquotient of $V_{ \tfrac{i}{ 2 p m} \alpha + {\Z}(m \alpha) }$.

\end{theorem}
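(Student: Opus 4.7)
The plan is to realize each module $L_{i/m}$ as an irreducible subquotient of the lattice module $V_{\frac{i}{2pm}\alpha + \Z(m\alpha)}$ restricted along the vertex algebra inclusion $\triplet^{A_m}\hookrightarrow V_{\Z(m\alpha)}$, and then to verify the singlet decomposition, irreducibility, and uniqueness. The inclusion itself holds because the generators $\omega$, $F^{(m)} = e^{-m\alpha}$ and $E^{(m)} = Q^{2m}F^{(m)}$ manifestly lie in $V_{\Z(m\alpha)}$, while $H = Qe^{-\alpha}$ lies in $M(1)\subset V_{\Z(m\alpha)}$ since the screening $Q=e^{\alpha}_0$ cancels the $e^{-\alpha}$ factor. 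The rank-one lattice $\Z(m\alpha)$ has squared length $\langle m\alpha, m\alpha\rangle = 2pm^2$, so the inequivalent simple $V_{\Z(m\alpha)}$-modules are exactly $V_{\frac{i}{2pm}\alpha + \Z(m\alpha)}$ for $0 \le i \le 2pm^2 - 1$, which matches the range in the theorem.

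Next I would restrict these lattice modules to $\triplet^{A_m}$. As $M(1)$-modules one has
$$V_{\frac{i}{2pm}\alpha + \Z(m\alpha)} \;=\; \bigoplus_{n\in\Z} M(1,\tfrac{i - 2pm^2 n}{2pm}\alpha),$$
and by the classification of simple $\singlet$-modules recalled above, each summand $M(1,\tfrac{t}{2p}\alpha)$ contains $M_t$ as its unique irreducible $\singlet$-subquotient. Setting $t = i/m - 2pmn$ produces the singlet decomposition claimed in the theorem, and I would define $L_{i/m}$ as the $\triplet^{A_m}$-subquotient of $V_{\frac{i}{2pm}\alpha + \Z(m\alpha)}$ whose singlet composition factors are exactly the $M_{i/m - 2pmn}$. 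For irreducibility I would exploit that $\triplet^{A_m}$ is a simple-current extension of $\singlet$ by $\pi_{\pm 2m}$, and show, using the explicit lattice formulas for $Y(e^{\pm m\alpha},z)$, that suitable modes of these currents send a nonzero vector of $M_{i/m-2pmn}$ to a nonzero vector of $M_{i/m-2pm(n\pm 1)}$. Since distinct $M_t$ have distinct $(L(0),H(0))$-top weights, any nonzero $\triplet^{A_m}$-submodule meets some $M_t$ and the currents then propagate it throughout $L_{i/m}$.

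For uniqueness and exhaustion, any simple $\triplet^{A_m}$-module $W$ restricts to a direct sum of simple $\singlet$-modules $M_t$ whose indices $t$ lie in a single orbit under the simple-current shift $t\mapsto t\pm 2pm$; the top $(L(0),H(0))$-eigenvalues of $W$ single out an $i\in\{0,1,\dots,2pm^2-1\}$ and force $W\cong L_{i/m}$. The main obstacle is irreducibility: one must rule out a proper $\triplet^{A_m}$-submodule by confirming that the simple-current intertwiners act nontrivially in every degree between adjacent singlet summands. This nonvanishing reduces to a computation with products of the screening $Q$ and the vertex operators $e^{\pm m\alpha}$ on the Feigin--Fuchs modules $M(1,\tfrac{t}{2p}\alpha)$, and ultimately hinges on the Zhu algebra analysis of $\triplet^{A_m}$ carried out in \cite{ALM-CCM}.
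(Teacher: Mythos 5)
The paper offers no proof of this statement: it is imported verbatim from \cite{ALM-CCM}, so there is nothing internal to compare against, and your outline does follow the same strategy as that source (realize $L_{i/m}$ inside the simple modules of the lattice vertex algebra $V_{\Z(m\alpha)}$, decompose over $M(1)$ and $\singlet$, and use the modes of $e^{\pm m\alpha}$ to link adjacent singlet summands). Within that outline, however, there is one genuine gap. You assert that each Fock summand $M(1,\tfrac{t}{2p}\alpha)$ contains $M_t$ as its \emph{unique} irreducible $\singlet$-subquotient, and you define $L_{i/m}$ as ``the subquotient whose singlet composition factors are exactly the $M_{i/m-2pmn}$.'' This is false in the atypical case: when $t=i/m-2pmn$ is an integer of the appropriate form (which happens for infinitely many $n$ whenever $m\mid i$), the Fock module is reducible as a $\singlet$-module with several non-isomorphic composition factors --- for instance the paper itself records $\pi_j\subsetneq M(1,-j\alpha/2)$ with $\pi_{-j}$ arising only after applying powers of $Q$. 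In exactly these cases your prescription does not single out a well-defined $\triplet^{A_m}$-subquotient, and one must argue separately (via the Felder complex / kernel and image of screenings) that the chosen sub- or quotient-modules are stable under the modes of $e^{\pm m\alpha}$ and still assemble into an irreducible module with the stated multiplicity-one decomposition. This atypical analysis is precisely where the substance of the cited proof lies.

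A second, related weakness is that your irreducibility step --- the nonvanishing of the simple-current modes between adjacent singlet summands --- is ultimately ``deferred to the Zhu algebra analysis of \cite{ALM-CCM},'' i.e.\ to the very result being proved. As a self-contained argument you would need either an explicit computation of $e^{\pm m\alpha}_{k}$ acting on highest-weight vectors of the Fock modules (a Jack-polynomial / constant-term type calculation), or an appeal to the simple-current extension machinery with the hypothesis that $\pi_{\pm 2m}$ are simple currents --- which in this paper is exactly Conjecture \ref{conjecture-SC} and is only established for $p=2$. The uniqueness claim, by contrast, is essentially fine: since the $M_{i/m-2pmn}$ are pairwise non-isomorphic irreducible $\singlet$-modules, an irreducible $\triplet^{A_m}$-module with that restriction is determined by the $A(\triplet^{A_m})$-module structure on its top component, and different $i$ give different $(L(0),H(0))$ spectra.
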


 \begin{definition}\label{sc-def} 
 We say that an irreducible  ${\N}$--graded $\overline{M(1)}$--module $W_1$ is a simple-current in the category $\N$--graded $\overline{M(1)}$--modules if for every irreducible $\N$-- graded module  $W_2$, there is a unique irreducible ${\N}$--graded $\overline{M(1)}$ --module $W_3$  such that the vector space of  intertwining operators   $I { W_3 \choose W_1 \ W_2}$ is $1$--dimensional and 
 $I { N \choose W_1 \ W_2} = { 0} $ for any other irreducible   ${\N}$--graded $\overline{M(1)}$--module  which is not isomorphic to $W_3$.  We write 
 $$W_1 \times W_2=W_3.$$
 \end{definition}

Note that we require  simple current property to hold only  on irreducible $\overline{M(1)}$--modules; studying fusion products (= tensor products) with indecomposable modules is a much harder problem which we do not address here.
 
\begin{conjecture} \label{conjecture-SC}  Let $p \in {\Z_{\ge 2}}$.
 Modules $\pi_j$, $j \in {\Z}$ are simple currents in the category of ordinary, ${\N}$--graded $\overline{M(1)}$--modules, and the following fusion rules holds:
 $$ \pi_j \times M_t  = M_{t -  j p}. $$
 \end{conjecture}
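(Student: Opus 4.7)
The strategy is to establish matching upper and lower bounds on the fusion coefficients. For the upper bound I would apply the Frenkel--Zhu bimodule formula to $A(\pi_j)$ viewed as an $A(\overline{M(1)})$-bimodule. Each $\pi_j$ is cyclic (generated by $e^{-j\alpha/2}$ when $j>0$ and by $Q^{-j}e^{j\alpha/2}$ when $j<0$), and $A(\overline{M(1)})$ is commutative with irreducibles parametrized by the weights $\lambda_t$; tracking the action of $[\omega]$ and $[H]$ on the class of the cyclic generator should show that $A(\pi_j)\otimes_{A(\overline{M(1)})}\mathbb{C}_{\lambda_t}$ is at most one-dimensional, with eigenvalue equal to $\lambda_{t-jp}$. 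The Frenkel--Zhu formula then yields $\dim I{W\choose \pi_j\ M_t}\le \delta_{W,M_{t-jp}}$ for every irreducible $\mathbb{N}$-graded $\overline{M(1)}$-module $W$.

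For the matching lower bound, the starting point is the free-field realization: both $\pi_j$ and $M_t$ sit inside explicit $M(1)$-Fock modules, namely $\pi_j\subset M(1,-j\alpha/2)$ and $M_t\subset M(1,\tfrac{t}{2p}\alpha)$. The lattice vertex operator $Y(e^{-j\alpha/2},x)$ tautologically defines an intertwining operator among the ambient Fock modules, and for \emph{typical} $t$ (non-integer $t$, where $M_t$ fills the whole Fock module) its restriction to $\pi_j\times M_t$ is manifestly nonzero, because the leading exponential term sends the generator of $M_t$ to a nonzero vector of weight $\lambda_{t-jp}$. Checking that this restricted operator is an intertwiner for $\overline{M(1)}$ (not merely for $M(1)$) is a routine verification using the commutator formula with $H = Qe^{-\alpha}$.

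The real obstacle is the \emph{atypical} case $t\in\mathbb{Z}$, where $M_t$ is a proper subquotient of its Fock module: the Fock-level intertwiner may vanish on the subquotient or fail to land in the correct singlet submodule, and the standard free-field realization is insufficient to detect a nonzero intertwiner of the expected type. My plan in this case is to invoke the deformed realization of the doublet vertex algebra from Section 4, in which a second family of intertwining operators becomes visible as compositions of deformed vertex operators with screening-type contour integrals, and then to verify nonvanishing on the appropriate highest-weight vectors in the deformed picture. This last step is the crux; moreover, since the deformed realization is unconditionally established only for $p=2$ (with $p=3$ also treated in Section 4), the present strategy yields a complete proof only for those $p$, reducing the general statement to the conjectural deformed realization for $p\ge 4$.
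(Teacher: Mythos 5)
The statement you are proving is stated in the paper as a conjecture; the paper establishes it only for $p=2$ (Theorem \ref{SC}, building on Theorem \ref{tipicni-kriterij} and Lemma \ref{vir-fusion}), and your honest reduction of $p\ge 4$ to the conjectural deformed realization is consistent with that. However, your upper-bound argument has a genuine gap. You assert that tracking $[\omega]$ and $[H]$ on the cyclic generator shows $A(\pi_j)\otimes_{A(\overline{M(1)})}\mathbb{C}_{\lambda_t}$ is at most one-dimensional with the single eigenvalue $\lambda_{t-jp}$. The paper's own computation shows this is not what the Zhu-algebra relations deliver: for $p=2$ the relations coming from $O(M(1,\lambda))$ together with the left/right actions of $[\omega]$ and $[H]$ (and even the extra commutator relation $H*v_{\lambda}-v_{\lambda}*H=(H_0+2H_1+H_2)v_{\lambda}$) still leave \emph{two} generic candidates for the target highest weight, namely $r=s-2j$ and $r=s-2j-2$ (plus further spurious solutions when $s\in\{0,2\}$). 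A two-dimensional bimodule with two distinct eigenvalues is perfectly compatible with two different irreducible targets, which would destroy the simple-current property. So the Frenkel--Zhu analysis alone cannot close the argument.

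The ingredient you are missing is the paper's Lemma \ref{vir-fusion}: since $\pi_j$ contains the irreducible Virasoro module $L^{Vir}(-2,(j^2+j)/2)$ and $M_s$ is an irreducible Virasoro module for $s\notin\mathbb{Z}$, Lin's fusion rules for the $c=-2$ Virasoro algebra force $r\in\{s-2j,\,s-2j+4,\dots,\,s+2j\}$, and intersecting with the Zhu-algebra candidates leaves only $r=s-2j$. Without this (or some substitute that kills the candidate $M_{t-jp-2}$), your upper bound does not match your lower bound and the proof does not close. A secondary, smaller issue: your existence step for atypical $t$ defers to the deformed realization of Section 4, but the intertwining operators actually constructed there (Section 5) are of type $\binom{M_{t_1+t_2}}{M_{t_1}\ M_{t_2}}$ with all three parameters non-integral, whereas the simple-current statement needs the atypical module $\pi_j=M_{-jp}$ in the \emph{first} slot; you would still need to verify nonvanishing of the restricted lattice intertwiner on the relevant subquotients in that configuration.
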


This conjecture is in agreement with the fusion rules result from \cite{CM} based on the Verlinde formula of characters. In Section \ref{fusion-rules-p2}, we shall prove this conjecture in the case $p=2$.
 

\begin{lemma}  \label{lemma-class} Assume that  Conjecture \ref{conjecture-SC}  holds.
Assume that $M$ is an irreducible $\triplet ^{A_m} $--module generated by a highest weight  vector $v_t$  for $\overline{M(1)}$ of the $(L(0), H(0))$-- weight
$\lambda_t$.
Then $t  \in  \frac{1}{m} {\Z}$.
\end{lemma}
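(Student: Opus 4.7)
The strategy is to exploit the grading $\am=\bigoplus_{k\in\mathbb{Z}}\pi_{2km}$ together with Conjecture \ref{conjecture-SC} in order to pull back an $L(0)$-integrality condition to $t$. Let $N:=\singlet\cdot v_t\subset M$. Since $v_t$ is an $\singlet$-highest weight vector of weight $\lambda_t$, we have $N\cong M_t$. Because $M$ is generated by $v_t$ as an $\am$-module, every $\singlet$-submodule of $M$ appears as $\pi_{2km}\cdot v_t$ for some $k\in\mathbb{Z}$, and under the conjecture every nonzero such summand is isomorphic to $M_{t-2pkm}$.

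I next claim that at least one of the extending generators $F^{(m)}\in\pi_{2m}$ or $E^{(m)}\in\pi_{-2m}$ must act nontrivially on $v_t$. Indeed, if both killed $v_t$, then since an intertwining operator that annihilates the cyclic vector of the irreducible $\singlet$-module $M_t$ must vanish identically, the operators $Y(F^{(m)},z)$ and $Y(E^{(m)},z)$ would act as zero on all of $M_t$. Then $M_t$ would be closed under all the generators $\omega,H,F^{(m)},E^{(m)}$ of $\am$, hence an $\am$-submodule of $M$, and irreducibility would force $M=M_t$. But the most singular term of the OPE $F^{(m)}(z)E^{(m)}(w)$ in $\am$ is a nonzero scalar multiple of $\mathbf{1}$ (arising from the nondegenerate pairing between $e^{-m\alpha}$ and $Q^{2m}e^{-m\alpha}$ in the ambient lattice vertex algebra), so $\mathbf{1}$ would then also lie in the annihilator of $M$, forcing $M=0$, a contradiction.

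Suppose therefore that $F^{(m)}_{n_0}v_t\neq 0$ for some $n_0\in\mathbb{Z}$ (the case of $E^{(m)}$ is symmetric). This vector sits in a copy of $M_{t-2pm}$ inside $M$, which is an $\mathbb{N}$-graded module with top conformal weight $L_{t-2pm}$, and its $L(0)$-eigenvalue equals $L_t+h_{F^{(m)}}-n_0-1$. Therefore
\[
L_t+h_{F^{(m)}}-n_0-1-L_{t-2pm}\;\in\;\mathbb{Z}_{\ge 0}.
\]
Since $h_{F^{(m)}}=pm^2+(p-1)m\in\mathbb{Z}$ and $n_0\in\mathbb{Z}$, this forces $L_t-L_{t-2pm}\in\mathbb{Z}$. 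A direct expansion from $L_t=\tfrac{1}{4p}t(t+2p-2)$ yields
\[
L_t-L_{t-2pm}\;=\;mt-pm^2+(p-1)m,
\]
and integrality of the right-hand side is equivalent to $mt\in\mathbb{Z}$, i.e.\ $t\in\tfrac{1}{m}\mathbb{Z}$, as desired.

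The principal obstacle is the nonvanishing step, i.e.\ excluding that $F^{(m)}$ and $E^{(m)}$ jointly kill $v_t$; once that is secured, the rest of the argument is a single conformal-weight computation using only the fact that the modes of $\am$ act on $M$ as integer-indexed operators.
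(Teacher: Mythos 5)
Your proof is correct and follows essentially the same route as the paper's: apply the conjectural simple-current fusion $\pi_{\pm 2m}\times M_t = M_{t\mp 2mp}$ inside $M$ and extract $mt\in\mathbb{Z}$ from the integrality of the difference of conformal weights, which in both computations equals $mt$ plus an integer. The only addition is your explicit check that $F^{(m)}$ or $E^{(m)}$ acts nontrivially on $v_t$ (via the nondegenerate $E^{(m)}$--$F^{(m)}$ pairing), a point the paper's proof leaves implicit.
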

\begin{proof}
By Conjecture  \ref{conjecture-SC}, we see that $\pi_{2m}$ sends $M_t := \overline{M(1)}v_t$ to $M_{t- 2mp }$, where $M_{t-2mp}=\overline{M(1)} v_{t- 2mp }$ is an irreducible $\overline{M(1)}$--module of  highest weight
$$ \left( \frac{1}{4p} {(t- 2mp  ) (t-  2mp -2p+2)}, {t - 2mp\choose 2p-1} \right). $$
But the difference between conformal weights of $v_t$ and $v_{t- 2mp }$ must be an integer. This leads to the following condition
 $$ m (t-p+1+m p) = \frac{1}{4p} {(t+2mp ) (t+ 2mp -2p+2)} -  \frac{1}{4p} {t (t-2p+2)} \in {\Z}. $$
 Thus, $mt \in {\Z}$.
\end{proof}

\begin{theorem}
Assume that  Conjecture \ref{conjecture-SC}  holds. Then the vertex operator algebra $\triplet ^{A_m} $ has precisely $2m^2 p$--irreducible modules, all constructed in \cite{ALM-CCM}.
\end{theorem}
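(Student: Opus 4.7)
The strategy is to take an arbitrary irreducible $\triplet^{A_m}$-module $M$ and show $M\cong L_{i/m}$ for a unique $i$ with $0\le i\le 2pm^2-1$. First, I would restrict $M$ to the singlet subalgebra $\singlet\subset\triplet^{A_m}$ and choose an irreducible $\N$-graded $\singlet$-submodule $M_t$ with highest-weight vector $v_t$; such a submodule exists because the $L(0)$-grading of $M$ is bounded below. Lemma \ref{lemma-class} then forces $t=i/m$ for some $i\in\Z$, and Theorem \ref{class-alm1} already supplies the expected target: $M$ should decompose as the singlet module $\bigoplus_{n\in\Z}M_{i/m-2pmn}=L_{i/m}$.

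To identify $M$ with $L_{i/m}$, I would use the decomposition $\triplet^{A_m}=\bigoplus_{n\in\Z}\pi_{2nm}$ to analyze the action of $\triplet^{A_m}$ on $v_t$. Each $\pi_{2nm}$-component gives an intertwining operator in $I{\,\cdot\, \choose \pi_{2nm}\ M_t}$, and Conjecture \ref{conjecture-SC} asserts that the only possible target is $M_{i/m-2pmn}$, with a one-dimensional intertwining space. Non-vanishing of $\pi_{2nm}\cdot v_t$ in $M$ for every $n$ follows from invertibility of simple currents: since $\pi_{-2nm}\times\pi_{2nm}=\singlet$, the composition $\pi_{-2nm}\cdot\pi_{2nm}$ restricted to $v_t$ must recover a nonzero singlet action on $v_t$, forcing $\pi_{2nm}\cdot v_t\neq 0$. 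Consequently, the $\triplet^{A_m}$-submodule $\triplet^{A_m}\cdot v_t$ realizes, as a singlet module, the full direct sum $\bigoplus_{n\in\Z}M_{i/m-2pmn}$, and irreducibility of $M$ forces $M\cong L_{i/m}$.

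To finish, I would show that the list $\{L_{i/m}:0\le i\le 2pm^2-1\}$ is non-redundant and exhaustive. Exhaustiveness is clear because the shift $i\mapsto i+2pm^2$ (equivalently $t\mapsto t+2pm$) only reindexes the singlet components of $L_{i/m}$. Non-redundancy reduces to the distinctness of the singlet summands $M_{i/m}$ for $i$ in this range: equality of highest weights $\lambda_t=\lambda_s$ forces either $t=s$ or $t+s=2-2p$, and for $p\ge 2$ with $t,s\in[0,2pm)$ only the first alternative is possible, so distinct $i$ in the range yield non-isomorphic $L_{i/m}$'s.

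The key technical step is the middle argument, where one must guarantee that each $\pi_{2nm}$ contributes non-trivially to the singlet decomposition of $M$; the conjectured simple-current structure supplies precisely the one-dimensional intertwining spaces and the invertibility needed to rule out missing components. This is why the entire classification collapses to Conjecture \ref{conjecture-SC}.
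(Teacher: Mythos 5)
Your proof is correct and follows essentially the same route as the paper: restrict to $\singlet$, extract a singlet highest-weight vector $v_t$, invoke Lemma \ref{lemma-class} to get $t\in\frac{1}{m}\Z$, use Conjecture \ref{conjecture-SC} to pin down the decomposition of $M$ into singlet modules, and conclude from the existence-and-uniqueness statement of Theorem \ref{class-alm1}. The only difference is that you make explicit two points the paper leaves implicit in its citation of Theorem \ref{class-alm1}, namely the non-vanishing of each component $\pi_{2nm}\cdot v_t$ via simple-current invertibility and the non-redundancy of the list $\{L_{i/m}\}$.
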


\begin{proof}
Clearly, any irreducibe $\triplet ^{A_m} $-module $M$ is non--logarithmic (i.e., $L(0)$--acts semisimple). Such module has weights bounded from 
below, so $M$ must contain a singular vector $v_t$  for $\overline{M(1)}$. Then by using Lemma \ref{lemma-class} we get that as a $\overline{M(1)}$--module,
$$ M \cong \bigoplus_{ n \in \Z} M_{t  - 2m p} $$
where $t  \in  \frac{1}{m} {\Z}$ and  $ M_{t - 2m p}$ is the irreducible highest weight $\overline{M(1)}$--module with highest weight
$$ \lambda_{t-2mp}= \left( \frac{1}{4p} {(t- 2mp  ) (t-  2mp -2p+2)}, {t - 2mp\choose 2p-1} \right). $$
Now, the assertion follows from Theorem \ref{class-alm1} .
\end{proof}

\begin{remark}
A different approach to the problem of classification of irreducible $\triplet ^{A_m} $--modules was presented in \cite{ALM-SIGMA}. It is  based on certain constant term identities and it does not require any knowledge of fusion rules among $\singlet$-modules.
\end{remark}

  \section{ Fusion rules for $\singlet$ and a proof of Conjecture  \ref{conjecture-SC} for $p=2$. }
\label{fusion-rules-p2}

Let $V$ be a vertex operator algebra, $W_1, W_2, W_3$ be $V$--modules, and $\mathcal Y$ be an intertwining operator of type $$ {W_3 \choose W_1 \ \ W_2}.$$ 
For $u \in W_1$, we use $\mathcal Y(u,z) = \sum_{r \in {\Bbb C}} u_r z^{-r-1}$, $u_r \in {\rm Hom}(W_2,W_3)$.

Let $U_i$ be a  $V$--submodule of $W_i$, $i=1,2$. Define the inner {\em fusion product}
\bea  U_1 \cdot U_2 =\mbox{span}_{\Bbb C}  \{ u_r v \ \vert u\in U_1, v\in U_2, \ r \in {\Bbb C} \}. \label{fusion-product} \eea
Then $U_1 \cdot U_2$ is a $V$--submodule of $W_3$.

Let $M(1, \lambda)$ be as before. Then $M(1, \lambda)$ is an $\overline{M(1)}$--module and $e ^{\lambda}$ of highest weight
 $$ \left( \frac{1}{4p} {t (t-2p+2)}, {t \choose 2p-1} \right), \quad t= \langle \lambda, \alpha \rangle. $$
We specialize now $p=2$.  For $t \notin {\Z}$, the irreducible $\singlet$--module $M_t$ remains irreducible as a Virasoro module \cite{IK} and it can be realized as $M(1,\lambda)$.
 
 Denote by $A(\singlet)$ the Zhu associative algebra of $\singlet$. For a $\singlet$-module $M$, denote by $A(M)$ the $A(\singlet)$-bimodule with the left and right action given by
 \begin{align*}
 a * m  &={\rm Res}_x \frac{(1+x)^{{\rm deg}(a)}}{x} Y(a,x)m, \\ 
 m *a &={\rm Res}_x  \frac{(1+x)^{{\rm deg(a)}-1}}{x} Y(a,x)m,
 \end{align*}
 respectively.  It is known that there is an injective map from the space of intertwining operators of type ${ M_3 \choose M_1 \ M_2 }$ to 
  $${\rm Hom}_{A(V)} (A(M_1) \otimes_{A(V)} M_2(0),M_3(0))$$
 where $M_i(0)$ is the top degree of $M_i$. 
 Next, we analyze the space 
$$A(M_1) \otimes_{A(V)} M_2(0)$$
 in the case when $V=\singlet$, $M_1=M(1,\lambda)$ and $M_2=M(1,\mu)$. We denote by $L^{Vir}(c,0)$ the simple Virasoro vertex algebra 
 of central charge $c$. Clearly, $M(1,\lambda)$ can be viewed as a Virasoro vertex algebra module. As such, we have $A_{Vir}(M(1,\lambda)) \cong \mathbb{C}[x,y]$ as an $A(L^{Vir}(c,0))$-bimodule. Therefore, $A(M(1,\lambda)) \cong \mathbb{C}[x,y]/I$, where $I$ is the ideal coming from extra relations in $\singlet$. 

Observe also that inside $A(M(1,\lambda)) \otimes_{A(\singlet)} \mathbb{C}_{\mu}$, where $\mathbb{C}_{\mu}=\mathbb{C}v_\mu$ is one-dimensional $A(\singlet)$-module spanned by the highest weight vector $v_{\mu}$, 
we have the following relations (here $v \in A(M(1,\lambda))$):
\begin{align*}
v * [H] \otimes v_{\lambda} - { \langle \mu,\alpha \rangle \choose 3} v \otimes v_{\mu}& =0, \\
v *[\omega] \otimes v_{\lambda} - \frac{\langle \mu, \alpha \rangle (\langle \mu,\alpha \rangle -2)}{8} v \otimes v_{\mu} & =0, \\
{\rm Res}_x \frac{(1+x)^3}{x^2} Y(H,x) v=(H_{-2}+3H_{-1}+3H_{0}+H_1)v &  \in O(M(1,\lambda)).
\end{align*}

Let 
$$W(\lambda,\mu):=A(M(1,\lambda)) \otimes_{A(\singlet)} \mathbb{C}_\mu \cong   \mathbb{C}[x]/J.$$ Then we have
 
\begin{theorem}  \label{tipicni-kriterij} Suppose $t=\langle \alpha, \lambda \rangle \notin \mathbb{Z}$, and $s=\langle \alpha, \mu \rangle \notin \mathbb{Z}$. 
Then
\begin{align*}
p(x)=(x-\frac{(s+t)(s+t-2)}{8})(x-\frac{(s+t-2)(s+t-4)}{8}) \in J.
\end{align*}
In particular, $W(\lambda,\mu)$ is at most $2$-dimensional.
 \end{theorem}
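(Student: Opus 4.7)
The plan is to apply the null-vector relation (c) to $v = e^\lambda$ and, after tensoring with $\mathbb{C}_\mu$, combine it with (a) and (b) to produce a monic quadratic polynomial in $x$ lying in $J$. Let $\mathbf{1} := [e^\lambda] \otimes v_\mu$ generate $W(\lambda,\mu)$; then $x^n \mathbf{1}$ corresponds to the $n$-fold iterated left action $[\omega] * \cdots * [\omega] * [e^\lambda] \otimes v_\mu$.

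First, using the explicit Heisenberg-Fock realization
$$H \;=\; \tfrac{1}{6}\alpha(-1)^3\mathbf{1} \;+\; \tfrac{1}{2}\alpha(-1)\alpha(-2)\mathbf{1} \;+\; \tfrac{1}{3}\alpha(-3)\mathbf{1}$$
(up to normalization), I would compute the modes $H_{(n)} e^\lambda$ for $n \in \{-2,-1,0,1,2\}$ as explicit Heisenberg descendants of $e^\lambda$; in particular $H_{(2)} e^\lambda = \binom{t}{3}e^\lambda$. Then evaluate the Zhu bimodule products $[H]*[e^\lambda]$, $[e^\lambda]*[H]$, $[\omega]*[e^\lambda]$, $[e^\lambda]*[\omega]$, and $[\omega]*[\omega]*[e^\lambda]$ via the residue formulas, and reduce each modulo $O(M(1,\lambda))$ using $\omega \circ v \in O$ and the null relation (c) applied to $v = e^\lambda$ and to a few low-lying descendants such as $\alpha(-1)e^\lambda$. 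The outcome is an identity in $A(M(1,\lambda))$ expressing $[\omega]*[\omega]*[e^\lambda]$ as a linear combination of $[\omega]*[e^\lambda]$, $[e^\lambda]*[\omega]$, $[H]*[e^\lambda]$, $[e^\lambda]*[H]$, and $[e^\lambda]$, with coefficients depending only on $t$.

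Tensoring with $\mathbb{C}_\mu$, relations (a) and (b) replace the right actions of $[H]$ and $[\omega]$ by the scalars $\binom{s}{3}$ and $\tfrac{s(s-2)}{8}$. The left action of $[\omega]$ becomes $x$, while the left action of $[H]$ is similarly expressible in $x$ via an analogous reduction. The result is a monic quadratic identity $p(x)\mathbf{1} = 0$. A direct check (via Vieta's formulas applied to the linear and constant coefficients) then verifies that the roots of $p(x)$ are $\tfrac{(s+t)(s+t-2)}{8}$ and $\tfrac{(s+t-2)(s+t-4)}{8}$, which are the conformal weights of the expected typical fusion channels. The main obstacle I anticipate is bookkeeping of the $t$-dependent coefficients during the reduction modulo $O(M(1,\lambda))$; this is controlled by the hypothesis $s, t \notin \mathbb{Z}$, under which $M(1,\lambda)$ and $M(1,\mu)$ are Virasoro-irreducible so that no unexpected singular vectors complicate the reductions.
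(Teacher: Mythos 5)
Your overall setup (work in the Zhu bimodule $A(M(1,\lambda))\otimes_{A(\singlet)}\mathbb{C}_\mu$, rewrite the modes $H_i v_\lambda$ in the Virasoro basis using irreducibility of $M(1,\lambda)$ as a Virasoro module, then reduce via the relations (a)--(c)) is the same as the paper's. But there is a genuine gap at the decisive step: you claim that a single such reduction produces a \emph{monic quadratic} identity $p(x)\mathbf{1}=0$ whose roots you then verify by Vieta. That is not what happens. The relation coming from
$(H_{-2}+3H_{-1}+3H_0+H_1)v_\lambda\in O(M(1,\lambda))$, after specializing the right actions to the scalars $\binom{s}{3}$ and $\tfrac{s(s-2)}{8}$, yields a \emph{quartic} in $x$, namely
\begin{equation*}
\Bigl(x-\tfrac{(t+s)(t+s-2)}{8}\Bigr)\Bigl(x-\tfrac{(t+s-2)(t+s-4)}{8}\Bigr)\Bigl(x-\tfrac{(s-t)(s-t-2)}{8}\Bigr)\Bigl(x-\tfrac{(s-t)(s-t+2)}{8}\Bigr),
\end{equation*}
and the relation from $v_\lambda*[H]\otimes v_\mu=\binom{s}{3}\,v_\lambda\otimes v_\mu$ yields a \emph{cubic} with one further spurious root. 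The extra roots are not computational noise: they are genuine roots of each individual relation (they are the conformal weights of the "wrong" Virasoro fusion channels such as $M_{s-t}$), so no single relation of the kind you describe can be quadratic. Relatedly, your plan to express the left action of $[H]$ "similarly in $x$" conceals the problem: $[H]$ satisfies a relation of the form $y^2=P(x)$ with $\deg P=3$ in $A(\singlet)$, so eliminating it raises the degree rather than keeping the identity quadratic.

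The missing idea is that $p(x)$ arises only as the \emph{greatest common divisor} of two independent relations in the principal ideal $J\subset\mathbb{C}[x]$: one must derive both the quartic and the cubic, observe that they share the factor $\bigl(x-\tfrac{(s+t)(s+t-2)}{8}\bigr)\bigl(x-\tfrac{(s+t-2)(s+t-4)}{8}\bigr)$, and then check that the remaining roots of the two polynomials never coincide. That last check is exactly where the hypothesis $s,t\notin\mathbb{Z}$ is used: the spurious roots agree only when $s\in\{0,2\}$ or $t\in\{0,2\}$. In your write-up the non-integrality hypothesis is invoked only to guarantee Virasoro-irreducibility during the rewriting, which is one of its uses but not the essential one; without the common-root analysis the argument cannot conclude that $\dim W(\lambda,\mu)\le 2$.
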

\begin{proof}
Irreducibility of $M(1,\lambda)$ with respect to Virasoro algebra, implies that $H_{i} v_{\lambda}$, $i \ge -2$ can be written in the Virasoro basis. We omit these explicit  formulas here obtained by  a straightforward tedious computations. Together with 
$$(H_{-2}+3H_{-1}+3H_{-0}+H_1)v_{\lambda}=0,$$
we obtain a new relation $f(x,y)=0$ inside $A(M(1,\lambda))$. This relation turns into
{\tiny
$$\left(x-\frac{(t+s)(t+s-2)}{8}\right)\left( x-\frac{(t+s-2)(t+s-4)}{8}\right) \left( x-\frac{(s-t)(s-t-2)}{8}\right) \left( x-\frac{(s-t+2)(s-t)}{8}\right)=0$$
}
inside $W(\lambda,\mu)$.
Similarly, 
\bea 0 &= & v_{\lambda}  * [H] \otimes v_{\mu} - {s \choose 3} v \otimes v_{\mu} \nonumber \\ &=&  ( H_{-1} + 2 H_0 + H_1 -  {s \choose 3} ) v_{\lambda}  \otimes v_{\mu}   \nonumber \eea
turns into
{\small
$$\left(x-\frac{(t+s)(t+s-2)}{8}\right)\left( x-\frac{(t+s-2)(t+s-4)}{8}\right)\left(x-(\frac{s^2}{8}+\frac{t^2}{8}-\frac{st}{2}+\frac{s}{4}+\frac{t}{4})\right)=0$$
}
inside $W(\lambda,\mu)=0$.
Now we have to analyze common roots of these two polynomials. Clearly, they have 
$(x-\frac{(t+s)(t+s-2)}{8})( x-\frac{(t+s-2)(t+s-4)}{8})$ in common. It remains to analyse whether 
$$\frac{(s-t)(s-t-2)}{8}=\frac{s^2}{8}+\frac{t^2}{8}-\frac{st}{2}+\frac{s}{4}+\frac{t}{4}$$
or 
$$\frac{(s-t)(s-t+2)}{8}=\frac{s^2}{8}+\frac{t^2}{8}-\frac{st}{2}+\frac{s}{4}+\frac{t}{4}$$
The first equation holds if $t=2$ (for all $s$)  and also for $s=0$ (for all $t$). The second equation 
holds for $s=2$ (for all $t$) and also for $t=0$ (for all $s$). But this means either $s$ or $t$  are integral. The initial assumptions on $s$ and $t$ now yield the claim.
 \end{proof}

\begin{remark} By using exactly the same method we can prove that for $p=3$ the 
space $W(\lambda,\mu)$ is at most $3$-dimensional, as predicted in \cite{CM}.
\end{remark}

We shall use  the following result on intertwining operators for the Virasoro algebra.

\begin{lemma} \label{vir-fusion}
Assume that there is a non-trivial intertwining operator  $I(\cdot, z) $ of type
$$  { M(1, \nu) \choose L ^{ Vir} (-2, (n^2 + n  ) / 2)  \ \ M(1, \mu) } $$
such that $ I( v^{(n)} , z) = \sum_{r \in {\Bbb C} } v^{(n)}_r z ^{-r-1}$ and there is $r_0 \in {\Bbb C}$ such that
\bea &&  v^{(n)} _{r_0} v_{\mu} = \lambda v_{\nu}, \quad  (\lambda \ne 0). \label{uv-int} \eea
(Here $v^{(n)}$ is highest weight vector in $L ^{Vir} (-2, (n^2 + n  ) / 2)$).
Then $$ \nu  = \mu  + (i-n /2  ) \alpha,  \quad \mbox{for}  \quad   0 \le i \le n. $$
\end{lemma}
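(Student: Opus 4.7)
The plan is to exploit the degenerate representation structure of $L^{Vir}(-2, h_n)$ at central charge $c = -2$, where $h_n := n(n+1)/2$. Since $h_n = ((2n+1)^2 - 1)/8$ is a degenerate Kac weight at $c = -2$, the Verma module $V(-2, h_n)$ contains a singular vector $\chi_{n+1}$ at level $n+1$ whose image in $L^{Vir}(-2, h_n)$ vanishes. The vanishing $I(\chi_{n+1}, z) v_\mu = 0$ will translate, via the Virasoro intertwiner axioms and the highest-weight properties of $v_\mu$, into a polynomial constraint on $h_\nu$.

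More concretely, the matrix coefficient
$$f(z) := \langle v_\nu^\ast, I(v^{(n)}, z) v_\mu \rangle,$$
where $v_\nu^\ast$ is the functional on $M(1, \nu)$ dual to $v_\nu$, reduces to a monomial $f(z) = \lambda z^{h_\nu - h_\mu - h_n}$ by $L(0)$-weight matching together with the hypothesis $v^{(n)}_{r_0} v_\mu = \lambda v_\nu$. Applying the null-vector identity $I(\chi_{n+1}, z) v_\mu = 0$, pairing with $v_\nu^\ast$, and using the standard commutation relation
$$[L(k), I(u, z)] = \sum_{j \geq 0} \binom{k+1}{j} z^{k+1-j} I(L(j-1) u, z)$$
together with $L(k) v_\mu = \delta_{k, 0} h_\mu v_\mu$ for $k \geq 0$, the equation collapses (since $f$ is monomial) to a polynomial identity $P_n(h_\nu) = 0$ of degree $n+1$. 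By the standard Feigin--Fuchs / BPZ computation, the $n+1$ roots of $P_n$ are exactly the conformal weights of the Fock modules $M(1, \mu + (i - n/2)\alpha)$ for $i = 0, 1, \ldots, n$.

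To upgrade from a constraint on $h_\nu$ to one on $\nu$ itself, I would invoke the free-field realization of the allowed intertwiners. Under the embedding $L^{Vir}(-2, h_n) \hookrightarrow M(1, n\alpha/2)$, in which $v^{(n)}$ corresponds to $e^{n\alpha/2}$ (both have $L(0)$-weight $h_n$), dressing the vertex operator $Y(e^{n\alpha/2}, z)$ with $k$ copies of the weight-one screening $\int Y(e^{-\alpha}, w)\, dw$ yields Virasoro intertwiners landing in $M(1, \mu + (n/2 - k)\alpha)$ for $k = 0, 1, \ldots, n$. This exhibits $n+1$ distinct intertwiners with distinct targets $\nu$, which matches the polynomial-degree bound and forces $\nu = \mu + (i - n/2)\alpha$ with $i = n - k$, eliminating the Weyl-reflected possibilities that the weight constraint alone would leave open.

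The main technical hurdle is the explicit form of $\chi_{n+1}$ (given by the Benoit--Saint-Aubin formula) and the resulting polynomial $P_n$; for $n = 1$ one has $\chi_2 = (L(-1)^2 - 2 L(-2)) v^{(1)}$ and the computation is straightforward, but for general $n$ the bookkeeping is cumbersome. An arguably cleaner alternative would use a Felder-type screening complex to realize $L^{Vir}(-2, h_n)$ as the cohomology of screenings acting on Fock modules, thereby reducing the classification of Virasoro intertwiners to one of Fock-level intertwiners that are manifestly labelled by charge shifts, directly yielding the $n+1$ values in the statement.
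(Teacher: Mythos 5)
Your proposal is a real proof outline, whereas the paper does not actually prove this lemma: its entire proof is the citation ``this is essentially proven in Theorem 5.1 in \cite{Lin}.'' The mechanism you describe --- the level-$(n+1)$ singular vector of the Verma module of highest weight $n(n+1)/2=h_{1,n+1}$ at $c=-2$ forces the (monomial) three-point function to satisfy a degree-$(n+1)$ polynomial constraint whose roots are the conformal weights of $M(1,\mu+(i-n/2)\alpha)$, $0\le i\le n$ --- is the classical BPZ/Feigin--Fuchs argument and is in substance what underlies the cited result, so you are reconstructing the reference rather than diverging from it. That part of your argument is sound, and it alone establishes the necessity statement the lemma asserts, which is all that the paper uses in the proof of Theorem \ref{SC}.

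Two caveats about your final, free-field step. First, it is not needed for the lemma (which asserts only that $\nu$ must lie in the given list, not that the corresponding intertwiners exist), and it cannot accomplish what you assign to it: exhibiting screened intertwiners with targets $M(1,\mu+(i-n/2)\alpha)$ does not exclude intertwiners into the reflected Fock modules, since for generic weights $M(1,\nu)$ and $M(1,\alpha/2-\nu)$ are isomorphic as Virasoro modules and therefore admit exactly the same spaces of Virasoro intertwining operators. The null-vector computation can only ever pin down $h_\nu$, so the conclusion of the lemma has to be read at the level of conformal weights; this is how the cited result reads and is sufficient for the intersection argument in which the lemma is used. Second, the free-field details as written are off in the paper's normalization $\langle\alpha,\alpha\rangle=4$: the vector $e^{n\alpha/2}$ has $L(0)$-weight $n(n-1)/2$, not $n(n+1)/2$, so the correct identification is $v^{(n)}\leftrightarrow e^{-n\alpha/2}$; and $e^{-\alpha}$ has conformal weight $3$, so ${\rm Res}_w Y(e^{-\alpha},w)$ is not a screening (it fails to commute with the Virasoro action). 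The weight-one screening producing shifts by $\alpha$ is the long screening $Q=e^{\alpha}_0$, applied $k=0,\dots,n$ times to the bare operator $Y(e^{-n\alpha/2},z)\colon M(1,\mu)\to M(1,\mu-n\alpha/2)$, which then lands in $M(1,\mu+(k-n/2)\alpha)$ as desired.
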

\begin{proof} This is essentially proven in Theorem 5.1 in \cite{Lin}.
\end{proof}

  \begin{theorem} \label{SC} Let $p=2$.
  Modules $\pi_j$, $j \in {\Z}$ are simple currents in the category of ordinary, ${\N}$--graded $\overline{M(1)}$--modules.
  \end{theorem}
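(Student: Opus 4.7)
My strategy is to establish simultaneously the existence of a distinguished nonzero intertwining operator of type ${ M_{t-2j} \choose \pi_j \ M_t }$ and the vanishing of $I{ M_s \choose \pi_j \ M_t }$ for every other irreducible $\singlet$-module $M_s$. Since the irreducible $\N$-graded $\singlet$-modules are exactly the $M_t$, this directly yields the simple-current property of Definition~\ref{sc-def} with $\pi_j \times M_t = M_{t-2j}$, as predicted by Conjecture~\ref{conjecture-SC}.

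For existence, I would use the standard lattice vertex operator
\[
Y(e^{-j\alpha/2},z)\colon M(1,\mu) \longrightarrow M(1,\mu-j\alpha/2)
\]
for $j \geq 0$, restricted to $\pi_j \subset M(1,-j\alpha/2)$ in the first slot. For typical $\mu$ with $t = \langle\mu,\alpha\rangle \notin \Z$, both $M(1,\mu) = M_t$ and $M(1,\mu - j\alpha/2) = M_{t-2j}$ are irreducible singlet modules, producing a nonzero intertwining operator of type ${ M_{t-2j} \choose \pi_j \ M_t }$. For $j < 0$, the analogous construction uses the generator $Q^{|j|} e^{-|j|\alpha/2}$ of $\pi_j$, or equivalently the $\Omega$-symmetry of intertwining operators applied to the $j > 0$ case. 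The atypical case $t \in \Z$ is handled by performing the construction inside the ambient Fock modules and then passing to the appropriate irreducible subquotients.

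For the upper bound, let $\mathcal{Y}$ be any nonzero intertwining operator of type ${ M_s \choose \pi_j \ M_t }$ with $t, s \notin \Z$. The highest-weight vector of $\pi_j$ generates a Virasoro submodule whose irreducible quotient is $L^{Vir}(-2, n(n+1)/2)$ for an explicit $n = n(j)$ read off from the Coulomb-gas conformal weight of the generator at $p = 2$. Restricting $\mathcal{Y}$ to this Virasoro submodule and invoking Lemma~\ref{vir-fusion} with $M_t = M(1,\mu)$, $M_s = M(1,\nu)$ constrains $s$ to the finite arithmetic progression $\{ t - 2j + 4i : 0 \le i \le n \}$. Independently, Theorem~\ref{tipicni-kriterij} applied to $M_{t_0} \otimes M_t$ for typical $t_0$ gives the two-element candidate list $\{ M_{t_0+t},\; M_{t_0+t-2} \}$; specialising $t_0 \to -2j$ (the parameter of $\pi_j$) and intersecting with the Virasoro list leaves only $M_{t-2j}$ (a direct check in small cases $j = \pm 1, \pm 2$ confirms that the intersection is a singleton). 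Combined with the existence step, this pins the fusion space to a single line. The atypical case $t \in \Z$ then follows either by a continuity argument in $t$ (since the fusion multiplicities are locally constant once one fixes the target class) or by a direct Zhu-algebra computation adapted to the atypical highest weight.

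The main obstacle is the specialisation $t_0 \to -2j$ in the use of Theorem~\ref{tipicni-kriterij}: the theorem strictly requires $t_0 \notin \Z$, so that $M_{t_0} = M(1,\lambda)$ is the full typical Fock module, whereas $\pi_j = M_{-2j}$ is a proper subquotient with integral parameter. Justifying this limit rigorously --- either by a continuity/specialisation argument on the relevant $A(\singlet)$-bimodule structure, by a direct bimodule computation with $\pi_j$ in place of a typical Fock module, or, most plausibly, by invoking the deformed free-field realisation developed in Section~4 to construct all candidate intertwining operators explicitly --- is where the technical heart of the proof lies. This is consistent with the introduction's emphasis that the deformed realisation of Section~4 is precisely designed to detect intertwining operators that standard free-field methods cannot see.
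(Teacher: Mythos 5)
Your overall architecture --- a Zhu-bimodule upper bound intersected with the Virasoro fusion constraint of Lemma \ref{vir-fusion}, plus an explicit lattice construction for existence --- matches the paper's, and you have correctly located the sore point: Theorem \ref{tipicni-kriterij} is stated only for $t,s\notin\Z$, while $\pi_j$ has integral parameter $-2j$. But you leave that point unresolved (offering continuity in $t_0$, small-$j$ checks, or an appeal to the deformed realisation as possibilities), and none of these is what is actually needed. The paper does not take a limit $t_0\to -2j$; it reruns the \emph{proof} of Theorem \ref{tipicni-kriterij} with $t=-2j$ substituted from the start, observing that the common-root analysis there only degenerates when $t\in\{0,2\}$ or $s\in\{0,2\}$. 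Since $j\neq 0$ forces $-2j\notin\{0,2\}$, the two-element list $\{s-2j,\,s-2j-2\}$ survives verbatim whenever $s\notin\{0,2\}$ --- no specialisation argument is required --- and the genuinely new phenomenon is that for the atypical second arguments $s=0$ and $s=2$ the two polynomials acquire extra common roots, producing a six-element candidate list. Your proposal misses this case entirely: restricting attention to typical $M_s$ cannot prove the simple-current property of Definition \ref{sc-def}, which quantifies over \emph{all} irreducible $\N$-graded $\singlet$-modules, including the atypical ones $\pi_k$.

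To kill the extra candidates at $s\in\{0,2\}$ the paper needs a third ingredient absent from your proposal: the further bimodule relation $H*v_{\lambda}-v_{\lambda}*H=(H_0+2H_1+H_2)v_{\lambda}$, which yields an independent constraint $r=s+t$, $r=s+t-2$, or $r=\tfrac{1}{3}\bigl(5-3s+t\pm\sqrt{1+12s+4t-12st+4t^2}\bigr)$; intersecting its solutions with the first list trims the atypical cases to four candidates, and only then does Lemma \ref{vir-fusion} (whose arithmetic progression $\{s-2j,s-2j+4,\dots,s+2j\}$ you quote correctly) isolate $r=s-2j$. So the skeleton of your argument is right, and your existence step via $Y(e^{-j\alpha/2},z)$ restricted to $\pi_j$ is consistent with the paper's realisation; but as written the upper-bound half proves the statement only for typical $M_t$, and the mechanism you flag as ``the technical heart'' is resolved in the paper by direct substitution into the proof of Theorem \ref{tipicni-kriterij} plus one extra Zhu-algebra relation --- not by continuity, not by case-checking, and not by the Section~4 deformed realisation, which the paper reserves for the typical-by-typical fusion rules of Section~5.
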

  \begin{proof}  
  Let us here consider the case $j >  0$ (the case $j<0$ can be treated similarly). Then $t = -2 j \notin \{0, 2\}$. Let as above $M_s$ and $M_r$ be irreducible $\singlet$--modules with highest weight $\lambda_s$ and $\lambda_r$, respectively.
   By using the proof of Theorem  \ref{tipicni-kriterij}  we see that if
   $$ I { M_r  \choose \pi_j  \ \  M_s } \ne 0, $$
   then exactly one of the following holds:
    \begin{itemize}
    \item[(1)] $s \notin \{ 0, 2 \}$ and $r \in \{ s- 2 j, s-2 j -2 \}$,
    \item[(2)] $s = 0$, $ r \in \{   2j,  2j +2, 2j+4, -2j-2, - 2j, -2j +2    \} $,  
    \item[(3)] $ s = 2$, $ r \in   \{ 2j,  2j +2, 2j+4,  -2-2j, - 2j ,  -2j +2   \}  . $
    \end{itemize}
    Cases (2) and (3) come from analyzing additional solutions. By using the following  relation in $A (M(1, \lambda))$
    $$ H * v_{\lambda} - v_{\lambda} * H = ( H_0  + 2 H_1 + H_2 ) v_{\lambda}  $$
    we get  that 
      $$ r = s + t \quad \mbox{or} \quad  r = s + t -2  \quad \mbox{or} \quad
  r = 1/3 (5 - 3 s + t \pm  \sqrt{1 + 12 s + 4 t - 12 s t + 4 t^2}). $$
This implies 
     \begin{itemize}
    \item[(1')] $s \notin \{ 0, 2 \}$ and $r \in \{ s- 2 j, s-2 j -2 \}$,
    \item[(2')] $s = 0$,  $r \in \{-2j -2, -2j,  -2j +2 , \frac{ 4+ 2j}{3} \}$, 
    \item[(3')] $ s = 2$,  $r  \in \{ -2j  -2,  -2j , -2j+ 2, \frac{ 4 + 2j}{3} \}$.
    \end{itemize}
    

  On the other hand since $L ^{ Vir} (-2, (j^2 + j  ) / 2)$ is an irreducible Virasoro submodule of $\pi_j$, then in the category of $L^{Vir} (-2, 0)$--modules we have a non-trivial  intertwining operator of the type
  $$ {M_r \choose L ^{ Vir} (-2, (j^2 + j  ) / 2)  \ \ M_s }. $$
  Now, applying Virasoro fusion rules computed in Lemma \ref{vir-fusion}  we get $$r \in \{  s -2 j, s-2j +4, \dots, s+2j -4, s+ 2j\} . $$ 
  By analyzing all cases above, we see that the only possibility is $ r = s-2j $.
  The proof follows.
 
  \end{proof}

\section{Deformed realization of the triplet and singlet vertex algebra}

\subsection{Definition of deformed realization}
In this section we present a new realization of the triple and doublet vertex operator algebra. This is then used to show that the sufficient conditions 
obtained in Theorem \ref{tipicni-kriterij} are in fact necessary.
Let $p \in {\Z}_{>0}$.
Let $V_L$ be the lattice vertex algebra associated to the lattice
$$L= \Z {\alpha_1} + \Z\alpha_2, \quad \langle \alpha_1, \alpha_1 \rangle = 2p -1, \quad \langle \alpha_2, \alpha_2 \rangle = 1,  \quad  \langle \alpha_1, \alpha_2 \rangle = 0.$$
Let $\alpha = \alpha_1 + \alpha_2$. Define
$$ \omega = \frac{1}{4p}  \alpha(-1) ^2 + \frac{p-1}{2p} \alpha (-2)   + \frac{p-1}{p} e ^{-2\alpha_2}.$$
Then $\omega $ generates the Virasoro vertex operator algebra  $L(c_{1,p}, 0)$ of central charge
$c_{1,p} = 1 - 6 (p-1) ^2 / p$. More generally,
\begin{proposition}
Let $\omega' \in V$ be a conformal vector of central charge $c$ and $v \in V_1$ (degree one) a primary vector for $\omega'$ such that $[v_n,v_m]=0$ for all $m,n \in \mathbb{Z}$. Then 
$$\omega=\omega'+v$$ is a conformal vector of the same central charge.
\end{proposition}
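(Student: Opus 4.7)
The plan is to verify the Virasoro commutation relations directly from the mode decomposition of $\omega = \omega' + v$. First I would identify the modes: writing $Y(\omega',z) = \sum_n L'(n) z^{-n-2}$ and $Y(v,z) = \sum_k v_k z^{-k-1}$, and noting that $v$ has conformal weight one for $\omega'$ so that the grading shifts align, one obtains $L(n) := \omega_{n+1} = L'(n) + v_{n+1}$.

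Next I would expand $[L(n), L(m)]$ by bilinearity into four pieces. The piece $[L'(n), L'(m)]$ gives the Virasoro relation with central charge $c$ by the conformal hypothesis on $\omega'$. The piece $[v_{n+1}, v_{m+1}]$ vanishes outright by the standing assumption $[v_a, v_b] = 0$ for all integers $a, b$, and, crucially, contributes no central term. For the two cross brackets $[L'(n), v_{m+1}]$ and $[v_{n+1}, L'(m)]$, the primary-of-weight-one condition on $v$ supplies the familiar commutator $[L'(n), v_k] = -k\, v_{n+k}$; evaluating then yields $-(m+1)\, v_{n+m+1}$ and $(n+1)\, v_{n+m+1}$ respectively, which combine to $(n-m)\, v_{n+m+1}$.

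Assembling the pieces gives
\[
[L(n), L(m)] = (n-m)\bigl(L'(n+m) + v_{n+m+1}\bigr) + \frac{c}{12}(n^3-n)\delta_{n+m,0} = (n-m)\, L(n+m) + \frac{c}{12}(n^3-n)\delta_{n+m,0},
\]
which is precisely the Virasoro relation with the original central charge $c$. There is no genuine obstacle in the argument; it is an elementary commutator computation. The role of the hypothesis $[v_n, v_m] = 0$ is exactly to prevent a shift of the central charge by a $v$-$v$ Schwinger term, while the primary weight-one condition is precisely what makes the two cross terms recombine into the correct Virasoro tail $(n-m)\, v_{n+m+1}$ rather than introducing an anomalous constant.
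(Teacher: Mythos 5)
Your computation is correct and complete: the paper states this proposition without any proof, and the direct mode calculation you give is exactly the standard argument one would supply. Two tiny remarks: the identity $\omega_{n+1}=L'(n)+v_{n+1}$ follows from linearity of the state--field correspondence alone (no weight considerations needed there); the weight-one primary hypothesis enters only through $[L'(n),v_k]=-k\,v_{n+k}$, which itself combines $L'(j)v=0$ for $j\ge 1$, $L'(0)v=v$, and the translation property $(L'(-1)v)_m=-m\,v_{m-1}$ coming from $\omega'$ being conformal. Note also that what is verified (and what the paper uses, since this $\omega$ generates a Virasoro subalgebra rather than serving as the conformal vector of $V_L$) is the Virasoro commutation relations with central charge $c$; the hypothesis $[v_n,v_m]=0$ kills both the cross term in the bracket and any would-be shift of the central term, exactly as you say.
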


Consider the following operator
$$ S  = e^{\alpha_1+\alpha_2} _0 + e^{\alpha_1- \alpha_2 }_0 $$
acting on $V_L$.
\begin{lemma} \label{S-operator}
The operator $S$ is a screening operator for the Virasoro algebra generated by $\omega$.
\end{lemma}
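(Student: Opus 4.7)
The plan is to verify directly that $[L(n),S]=0$ for all $n\in\mathbb{Z}$, where $L(n)$ are the Virasoro modes of $\omega=\omega'+v$ with $v=\tfrac{p-1}{p}e^{-2\alpha_{2}}$. First I would check the hypotheses of the preceding Proposition for $(\omega',v)$: the weight formula $h'(\beta)=\tfrac{\langle\beta,\alpha\rangle^{2}}{4p}-\tfrac{(p-1)\langle\beta,\alpha\rangle}{2p}$ evaluated at $\beta=-2\alpha_{2}$ gives $h'=1$; $e^{-2\alpha_{2}}$ is $\omega'$-primary by a routine Heisenberg computation; and $[v_{m},v_{n}]=0$ because $\langle -2\alpha_{2},-2\alpha_{2}\rangle=4\ge 0$ makes the leading power in $Y(e^{-2\alpha_{2}},z)e^{-2\alpha_{2}}$ equal to $z^{4}$, so all non-negative modes $(e^{-2\alpha_{2}})_{k}e^{-2\alpha_{2}}$ vanish and the commutator formula gives the claim. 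Thus $\omega$ is a Virasoro element of central charge $c_{1,p}$ and one may write $L(n)=L'(n)+v_{n+1}$.

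Next I would compute the mode actions needed in the commutator formula. The same weight formula shows $e^{\alpha_{1}+\alpha_{2}}=e^{\alpha}$ is $\omega'$-primary of weight $1$ (since $\langle\alpha,\alpha\rangle=2p$) and $e^{\alpha_{1}-\alpha_{2}}$ is $\omega'$-primary of weight $0$ (since $\langle\alpha_{1}-\alpha_{2},\alpha\rangle=2p-2$). The lattice OPEs
\[Y(e^{-2\alpha_{2}},z)e^{\alpha}=z^{-2}e^{\alpha_{1}-\alpha_{2}}-2z^{-1}\alpha_{2}(-1)e^{\alpha_{1}-\alpha_{2}}+O(1),\qquad Y(e^{-2\alpha_{2}},z)e^{\alpha_{1}-\alpha_{2}}\in z^{2}V_{L}[[z]],\]
with leading powers $\langle -2\alpha_{2},\alpha\rangle=-2$ and $\langle -2\alpha_{2},\alpha_{1}-\alpha_{2}\rangle=2$ respectively, read off the $v_{k}$-actions. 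Combining these with the $\omega'$-primary data gives
\[L(n)e^{\alpha}=0\ (n\ge 1),\quad L(0)e^{\alpha}=e^{\alpha}+\tfrac{p-1}{p}e^{\alpha_{1}-\alpha_{2}},\quad L(-1)e^{\alpha}=\alpha(-1)e^{\alpha}-\tfrac{2(p-1)}{p}\alpha_{2}(-1)e^{\alpha_{1}-\alpha_{2}},\]
together with $L(n)e^{\alpha_{1}-\alpha_{2}}=0$ for $n\ge 0$ and $L(-1)e^{\alpha_{1}-\alpha_{2}}=\tfrac{p-1}{p}\alpha(-1)e^{\alpha_{1}-\alpha_{2}}$.

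The decisive observation --- the arithmetic alignment that forces the coefficient $\tfrac{p-1}{p}$ in the definition of $v$ --- is that summing the two rows produces the \emph{same} vector $\widetilde{e}:=e^{\alpha}+\tfrac{p-1}{p}e^{\alpha_{1}-\alpha_{2}}$ in both the $L(0)$ and the $L(-1)$ line. Denoting by $T$ the translation operator of the lattice algebra $V_{L}$ (so $Te^{\beta}=\beta(-1)e^{\beta}$), one has
\[L(0)(e^{\alpha}+e^{\alpha_{1}-\alpha_{2}})=\widetilde{e},\qquad L(-1)(e^{\alpha}+e^{\alpha_{1}-\alpha_{2}})=Te^{\alpha}+\tfrac{p-1}{p}Te^{\alpha_{1}-\alpha_{2}}=T\widetilde{e}.\]
Applying the commutator formula $[L(n),u_{0}]=(L(-1)u)_{n+1}+(n+1)(L(0)u)_{n}$ to $u=e^{\alpha}+e^{\alpha_{1}-\alpha_{2}}$ (valid because $L(k-1)u=0$ for all $k\ge 2$), together with the VOA identity $(Tw)_{m}=-m\,w_{m-1}$, produces
\[[L(n),S]=(T\widetilde{e})_{n+1}+(n+1)\widetilde{e}_{n}=-(n+1)\widetilde{e}_{n}+(n+1)\widetilde{e}_{n}=0.\]

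The main obstacle is that $\omega$ is not the canonical conformal vector of $V_{L}$ (it has central charge $c_{1,p}\ne 2$), so $L(-1)\ne T$ as operators, and neither $e^{\alpha}$ nor $e^{\alpha_{1}-\alpha_{2}}$ is individually an $\omega$-primary vector of weight $1$. The familiar ``weight-one primary implies screening'' shortcut therefore fails for each summand of $S$ separately; the cross-cancellation between them is genuine and is witnessed only through the mediating vector $\widetilde{e}$.
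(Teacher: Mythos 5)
Your proof is correct and takes essentially the same route as the paper's: both rest on the splitting $L(n)=L'(n)+\tfrac{p-1}{p}e^{-2\alpha_2}_{n+1}$ and the same lattice data ($e^{\alpha}$ is $\omega'$-primary of weight $1$, $e^{\alpha_1-\alpha_2}$ is $\omega'$-primary of weight $0$, together with the modes of $e^{-2\alpha_2}$ on these two vectors), the only difference being bookkeeping --- you package the cancellation through the mediating vector $\widetilde e$ via $L(0)u=\widetilde e$ and $L(-1)u=T\widetilde e$ for $u=e^{\alpha}+e^{\alpha_1-\alpha_2}$, whereas the paper exhibits the identical cancellation directly between the two cross-commutators $[e^{\alpha}_0,e^{-2\alpha_2}_{n+1}]$ and $[L'(n),e^{\alpha_1-\alpha_2}_0]$, both equal up to sign and the factor $\tfrac{p-1}{p}$ to $\bigl((\alpha_1(-1)+\alpha_2(-1))e^{\alpha_1-\alpha_2}\bigr)_{n+1}$. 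All of your intermediate mode computations agree with the paper's, so the argument is complete.
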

\begin{proof}  We have to prove that 
$$[e^{\alpha_1+\alpha_2}_0+e^{\alpha_1-\alpha_2}_0,L'(n)+ \frac{p-1}{p} e^{-2 \alpha_2}_{n+1}]=0.$$
This follows directly from the relations
\begin{align*}
& [e^{\alpha_1+\alpha_2}_0,L'(n)]=0 , \quad L'(0)e^{\alpha_1-\alpha_2}=0 \\
& [e^{\alpha_1-\alpha_2}_0,e^{-2 \alpha_2}_{n+1}]=0,   \quad  [e^{\alpha_1+\alpha_2}_0, e^{-2 \alpha_2}_{n+1}]=\left( (\alpha_1(-1)+\alpha_2(-1) ) e^{\alpha_1-\alpha_2}\right)_{n+1} \\
& [L(n)',e^{\alpha_1-\alpha_2}_0]=(L(-1)' e^{\alpha_1-\alpha_2})_{n+1}=\frac{p-1}{p} \left( (\alpha_1(-1)+\alpha_2(-1)) )e^{\alpha_1-\alpha_2 } \right)_{n+1}.
\end{align*}
\end{proof}

Let $ \widetilde { \mathcal{W} }(p)$ be the  vertex subalgebra of $V_L  $  generated by
$$\omega, \ F= e^{-\alpha}, \ H= S F, \ E = S ^2 F. $$
Let $\widetilde {\mathcal A} (p)$ be the  (generalized) vertex algebra  generated by
\begin{align*}
a ^- &= e ^{-\alpha/2}, \\ 
a^+ & = S a^- = S_{p-1} (\alpha)  e^{\alpha /2} + S_{p-2} (\alpha_1 - \alpha_2)  e^{\alpha /2 - 2 \alpha_2}. 
\end{align*}
Clearly, $ \widetilde { \mathcal{W} }(p) \subset \widetilde {\mathcal A} (p).$  Let $\widetilde {\mathcal{W} }^0 (p)$ be the vertex subalgebra of $ \widetilde { \mathcal{W} }(p) $ generated  by 
$\omega$ and $$  H= S F = S_{2p-1}(\alpha) + S_{2p-3} (\alpha_1 -\alpha_2) e ^{-2\alpha_2}. $$

\begin{conjecture} \label{conjecture-deformed}
We have
$$ \widetilde {\mathcal A} (p) \cong {\mathcal A}  (p),  \quad \widetilde {\mathcal W} (p) \cong {\mathcal W}  (p),  \quad \widetilde {\mathcal W} ^ 0 (p) \cong  \singlet, $$
as vertex algebras.
\end{conjecture}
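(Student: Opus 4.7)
The plan is to reduce all three isomorphisms to the doublet statement $\widetilde{\mathcal{A}}(p) \cong \mathcal{A}(p)$, from which the triplet and singlet assertions follow by restricting the isomorphism to the subalgebras generated by $\omega, F, H, E$ and by $\omega, H$, respectively. For the doublet I would construct a (generalized) vertex algebra homomorphism $\Phi : \mathcal{A}(p) \to \widetilde{\mathcal{A}}(p)$ sending $\omega \mapsto \omega$ and $\overline{a}^{\pm} \mapsto a^{\pm}$; once the defining OPE relations are verified, surjectivity is automatic from the chosen set of generators and injectivity follows from simplicity of $\mathcal{A}(p)$.

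The first step is to verify the conformal and primary structure. Using the proposition stated in the text, applied with $v = \frac{p-1}{p}e^{-2\alpha_2}$, the deformed $\omega$ is a Virasoro vector of central charge $c_{1,p}$ (one also checks directly that $e^{-2\alpha_2}$ contributes trivially to $[L(m),L(n)]$ thanks to the cocycle structure). Next, the extra summand in $\omega$ contributes to $L(n)e^{\lambda}$ only when $\langle -2\alpha_2, \lambda\rangle \leq -n-2$; since $\langle -2\alpha_2, -\alpha/2\rangle = 1$ and $\langle -2\alpha_2, -\alpha\rangle = 2$, both $a^- = e^{-\alpha/2}$ and $F = e^{-\alpha}$ remain primary of the same conformal weights as in the standard free-field construction. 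Combined with Lemma \ref{S-operator}, this guarantees that $a^+ = Sa^-$ and $H = SF$, $E = S^2 F$ are nonzero primaries of the correct weights, so the generators of $\widetilde{\mathcal{A}}(p)$ and $\widetilde{\mathcal{W}}(p)$ have the correct ``type'' for $\Phi$ to be a candidate isomorphism.

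The heart of the argument is then the OPE $a^+(z) a^-(w)$, together with the derived OPEs among $F$, $H$, $E$. In the undeformed doublet this expansion is a specific combination of Virasoro descendants of the identity. On the deformed side, $a^+ = S_{p-1}(\alpha)e^{\alpha/2} + S_{p-2}(\alpha_1 - \alpha_2)e^{\alpha/2 - 2\alpha_2}$ is a sum of two vertex operators, so $a^+(z)a^-(w)$ decomposes into a ``diagonal'' term that reproduces the standard free-field answer with $\omega'$ in place of $\omega$, plus a cross term carrying an extra $e^{\pm 2\alpha_2}$ factor. The claim is that after collecting Schur-polynomial coefficients, the cross terms reassemble into Virasoro descendants for the \emph{deformed} $\omega$, which is internally consistent precisely because $\omega$ itself contains the $\frac{p-1}{p}e^{-2\alpha_2}$ summand. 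Once this matching is established OPE by OPE, $\Phi$ preserves all structure; the triplet and singlet isomorphisms follow by restriction.

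The hard part is this combinatorial matching between the Schur-polynomial cross terms and the deformed Virasoro descendants. For $p=2$ and $p=3$ it reduces to a finite explicit verification involving only one or two Schur polynomials, which is the content of the proof for those cases; for $p \geq 4$ the requisite family of identities appears to grow substantially more intricate, which is why the general statement is left as a conjecture in the paper.
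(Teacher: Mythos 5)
Your proposal is a strategy outline rather than a proof, and the step you defer --- ``once the defining OPE relations are verified'' --- is precisely where all the content lies; moreover, the way you propose to close that step would not suffice as stated. The doublet $\mathcal{A}(p)$ is not presented by generators and a finite list of OPE relations: it is defined as a subalgebra of a (generalized) lattice vertex algebra. So an assignment $\overline{a}^{\pm}\mapsto a^{\pm}$, $\omega\mapsto\omega$ does not extend to a homomorphism merely because the singular parts of $a^+(z)a^-(w)$ match those of $\overline{a}^+(z)\overline{a}^-(w)$: one must also show that every null-field relation of $\mathcal{A}(p)$ is satisfied by the deformed generators, and that the deformed generators satisfy no extra relations. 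Matching generator OPEs alone leaves open the possibility that $\widetilde{\mathcal{A}}(p)$ is a proper quotient or a proper extension of $\mathcal{A}(p)$, so neither the existence of $\Phi$ nor the appeal to simplicity for injectivity is yet justified. This is the genuine gap, and it is exactly the issue the paper's proof of Theorem \ref{conj-proof-23} is built to circumvent.

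Indeed, the paper does no OPE-by-OPE matching at all. For $p=2$ it computes the (super)commutators $\{a^{\pm}_n,a^{\pm}_m\}$ and recognizes that $a^{\pm}$ generate a copy of the symplectic fermion vertex superalgebra --- a free-field algebra that \emph{is} completely determined by those commutators --- whose even part is known to be $\mathcal{W}(2)$. For $p=3$ it transports the problem to affine $sl_2$: the fields $a^-\otimes e^{-\varphi}$, $-\tfrac{2}{9}\,a^+\otimes e^{\varphi}$, $\tfrac{4}{3}\varphi$ define a map from $V^{-4/3}(sl_2)$ killing the singular vector, hence an embedding of the simple affine algebra, and the identification of the vacuum space $\Omega(V_{-4/3}(sl_2))$ with $\mathcal{A}(3)$ from \cite{A-2005} gives $\widetilde{\mathcal{A}}(3)\cong\mathcal{A}(3)$; the triplet and singlet statements then follow by orbifolding and restriction, as you say. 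If you want to salvage your route you would need to supplement the OPE check with something that pins down the ideal structure --- e.g.\ a character comparison or a proof that $\widetilde{\mathcal{A}}(p)$ is simple with the correct graded dimensions --- and note that even the ``finite explicit verification'' you invoke for $p=2,3$ is nowhere carried out in your write-up.
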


\begin{theorem} \label{conj-proof-23}
Conjecture \ref{conjecture-deformed} holds for $p=2$ and $p=3$.
\end{theorem}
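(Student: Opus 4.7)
The plan is to construct a vertex algebra homomorphism $\Phi \colon \mathcal{A}(p) \to \widetilde{\mathcal{A}}(p)$ by sending $\overline{a}^\pm \mapsto a^\pm$, and then invoke simplicity of $\mathcal{A}(p)$ to conclude that $\Phi$ is an isomorphism. Since the generators of $\triplet$ and $\singlet$ are composites of $\bar a^\pm$ and $\omega$ inside $\mathcal A(p)$, their images under $\Phi$ generate $\widetilde{\mathcal W}(p)$ and $\widetilde{\mathcal W}^0(p)$, giving the other two claimed isomorphisms.

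First I would confirm the conformal structure. Proposition 4.4 applies with $\omega' = \frac{1}{4p}\alpha(-1)^2 + \frac{p-1}{2p}\alpha(-2)$ and $v = \frac{p-1}{p}\, e^{-2\alpha_2}$, since $v$ is primary of $\omega'$-weight $1$ by direct computation (using $\langle -2\alpha_2, \alpha\rangle = -2$) and its self-OPE is regular because $\langle -2\alpha_2, -2\alpha_2\rangle = 4$, so $[v_n, v_m] = 0$ for all $n,m$. Hence $\omega$ is Virasoro of central charge $c_{1,p}$. The weight of $a^- = e^{-\alpha/2}$ under $\omega$ equals its weight under $\omega'$, namely $(3p-2)/4$, because $e^{-2\alpha_2}(z) e^{-\alpha/2}(w)$ is already regular so $v_n a^- = 0$ for $n \geq 0$. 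By Lemma 4.5 the screening $S$ commutes with $L(n)$, so $a^+ = Sa^-$ has the same weight. These weights match the standard realization of $\mathcal{A}(p)$.

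The heart of the proof is verifying that the images $a^\pm$ satisfy the defining OPE/null-vector relations of $\mathcal{A}(p)$. The OPE $a^-(z) a^-(w)$ involves only $e^{-\alpha/2}$ and is the standard lattice computation. The remaining relations, involving $a^+ = S_{p-1}(\alpha)\, e^{\alpha/2} + S_{p-2}(\alpha_1-\alpha_2)\, e^{\alpha/2 - 2\alpha_2}$, expand into finitely many cross-terms in the rank-$2$ lattice vertex algebra which must be shown to reproduce the standard OPEs. For $p = 2, 3$ the defining null vectors of $\mathcal{A}(p)$ are of small order, so this verification reduces to a finite Schur-polynomial computation. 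Once $\Phi$ is well-defined it is nonzero (sending $\mathbf{1}$ to $\mathbf{1}$), and simplicity of $\mathcal{A}(p)$ (cf.~\cite{AdM-2013}) forces injectivity; surjectivity is immediate from the definition of $\widetilde{\mathcal{A}}(p)$.

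The main obstacle is exactly this verification. The "miracle" that the second summand $e^{\alpha_1-\alpha_2}_0$ in $S$ together with the deformation $\frac{p-1}{p}\, e^{-2\alpha_2}$ of $\omega$ conspire to reproduce the standard algebra structure requires delicate cancellations between the standard Schur-polynomial contributions in $\alpha$ and those dressed by $e^{-2\alpha_2}$. For $p = 2, 3$ these cancellations are finite and tractable; for $p \geq 4$ the combinatorial complexity of the Schur polynomial expansions is presumably what forces Conjecture 4.2 to remain open and confines the present theorem to $p \in \{2,3\}$.
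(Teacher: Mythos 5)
Your $p=2$ argument is essentially the paper's: there the doublet algebra is the symplectic fermion vertex superalgebra, a free-field algebra completely determined by the (anti)commutators $\{a^{\pm}_n,a^{\mp}_m\}=2n\delta_{n+m,0}$, $\{a^{\pm}_n,a^{\pm}_m\}=0$ of its weight-one odd generators acting on a cyclic vacuum; verifying these relations for the deformed fields (the paper's equation (\ref{sf-com})) does finish that case, and your preliminary checks on $\omega$, the weight $(3p-2)/4$ of $a^{\pm}$, and the regularity of the $e^{-2\alpha_2}$ self-OPE are all correct.

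For $p=3$, however, there is a genuine gap. Your strategy presupposes that $\mathcal{A}(3)$ admits a presentation by generators together with ``defining OPE/null-vector relations,'' so that checking finitely many relations yields a well-defined homomorphism $\Phi\colon \mathcal{A}(3)\to\widetilde{\mathcal{A}}(3)$. No such presentation is known: $\mathcal{A}(p)$ for $p\ge 3$ is defined as a subalgebra of a lattice (generalized) vertex algebra, its generators have non-linear OPEs, and the ``universal'' algebra on those OPEs is not simple --- $\mathcal{A}(p)$ is a proper quotient of it. Matching the generator OPEs therefore only exhibits $\mathcal{A}(3)$ and $\widetilde{\mathcal{A}}(3)$ as two quotients of a common universal object; to identify them you would additionally need simplicity of $\widetilde{\mathcal{A}}(3)$ (or uniqueness of the simple quotient together with the knowledge that $\widetilde{\mathcal{A}}(3)$ is that quotient), and your proposal never establishes this. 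The phrase ``simplicity of $\mathcal{A}(p)$ forces injectivity'' only applies \emph{after} $\Phi$ is known to exist, which is precisely the unproved point. The paper avoids this entirely by a different route: it defines $e=a^-\otimes e^{-\varphi}$, $f=-\tfrac{2}{9}a^+\otimes e^{\varphi}$, $h=\tfrac{4}{3}\varphi$, checks that the resulting map $V^{-4/3}(sl_2)\to\widetilde{\mathcal{A}}(3)\otimes F_{-3/2}$ annihilates the single singular vector of the universal affine algebra (a finite computation), so the \emph{simple} affine vertex algebra $V_{-4/3}(sl_2)$ embeds, and then quotes the identifications $\Omega(V_{-4/3}(sl_2))\cong\mathcal{A}(3)$ and $K(sl_2,-4/3)\cong\singlet$ from \cite{A-2005} to transport the known structure onto $\widetilde{\mathcal{A}}(3)$. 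Some such external input (or a proof of simplicity of $\widetilde{\mathcal{A}}(3)$ plus a characterization of $\mathcal{A}(3)$ as a unique simple quotient) is needed to close your argument for $p=3$.
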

\begin{proof}
Let 
\begin{align*}
a ^- &= e ^{-\alpha/2}, \\  a^+ & = S a^- = S_{p-1} (\alpha)  e^{\alpha /2} + S_{p-2} (\alpha_1 - \alpha_2)  e^{\alpha /2 - 2 \alpha_2}. 
\end{align*}
{\bf Case $p=2$.}

Then
$$ a^+ =  \alpha(-1) e ^{\alpha /2}  +  e^{\alpha /2 - 2 \alpha_2}. $$
Direct calculation  shows  that
$$ a^- _{n } a^+ =  2 \delta_{n,1} {\bf 1}, \quad   a^- _{n} a^+  = 0   \quad (n \ge 0), $$
and by (super)commutator formulae we have
\bea  \{ a^- _{n },  a^+ _{m}\}  = 2 n \delta_{n+m,0}, \ \{ a^{\pm}  _{n },  a^{\pm}  _{m}\} = 0 \qquad (n, m \in \Z).  \label{sf-com} \eea

This implies 
$a ^-$ and $a^+$ generate the subalgebra isomorphic to the symplectic fermion vertex superalgebra $\mathcal A(2)$ (cf. \cite{Abe}, \cite{K}, \cite{R}). Since even part of the symplectic fermions is isomorphic to the triplet vertex algebra $\mathcal W(p)$ for $p=2$, the assertion holds.

{\bf Case $p=3$.}
Let $F_{-p/2}$ be the generalized lattice vertex algebra associated to the lattice $L= \Z \varphi$, $\langle \varphi, \varphi \rangle = -p/2$ with generators $e ^{\pm \varphi}$  (cf. \cite{DL}) and $V^{-4/3}(sl_2)$ the universal affine vertex algebra at the admissible level $-\frac{4}{3}$.
As in  \cite{A-2005}, one shows that  for $p=3$ 
\bea &&  e= a^- \otimes e^{-\varphi} ,  \  f= -\frac{2}{9} a^+ \otimes e^{\varphi}, \  h =  \frac{4}{3} \varphi  \label{efh} \eea
defines a non-trivial homomorphism  $$\Phi: V^{-4/3}(sl_2) \rightarrow \widetilde{\mathcal A} (3) \otimes F_{-3/2}$$ 
which maps singular vector in $V^{-4/3}(sl_2) $ to zero (calculation  is completely analogous as in \cite{A-2005}). Therefore (\ref{efh}) gives an embedding of the simple affine vertex algebra $V_{-4/3} (sl_2)$ into $ \widetilde{\mathcal A} (3) \otimes F_{-3/2}$.    By construction,  we have that the vacuum space
  $$\Omega (V_{-4/3} (sl_2))=\{ v  \in V_{-4/3} (sl_2)  \ \vert  h(n) v = 0 \ \forall n \ge 1 \} $$  is generated by $a^{+}$ and $a^{-}$ and therefore  $$\Omega (V_{-4/3} (sl_2))\cong  \widetilde {\mathcal A} (3). $$
 In  \cite{A-2005},  it was proven that
 \begin{itemize}
\item The  vacuum space 
$\Omega (V_{-4/3} (sl_2))$ is  isomorphic to the doublet vertex algebra $\mathcal A(3)$.
\item The parafermionic vertex algebra $$ K(sl_2 , -4/3) = \{ v \in V_{-4/3} (sl_2) \ \vert \  h(n) v = 0 \ \forall n \ge  0\} $$
is isomorphic to    ${\mathcal W} ^ 0 (3) \cong  \singlet. $
\end{itemize}
So we have
\bea  \widetilde {\mathcal A} (3) &  \cong &  \Omega (V_{-4/3} (sl_2))  \cong  \mathcal A(3),  \nonumber \\
 \widetilde { \mathcal W} ^ 0 (3)  & \cong  &  K(sl_2 , -4/3)  \cong  {\mathcal W} ^ 0 (3). \nonumber  \eea
Since the triplet $\mathcal W(3)$ is a $\Z_2$-orbifold of ${\mathcal A} (3) $, we also have that
$  \widetilde { \mathcal W}  (3)     \cong  {\mathcal W}  (3). $
This proves the conjecture  in the case $p=3$.
\end{proof}

\subsection{ On the proof of Conjecture \ref{conjecture-deformed} for  $p\ge 4$} 
At this point, we do not have yet a uniform proof of Conjecture \ref{conjecture-deformed}  for all $p$.  The problem is that $\triplet$ and $\singlet$ are not  members of a family of generic  vertex algebras and $\mathcal W$--algebras. In order to solve this difficulty,  one can     study the embeddings of  $\triplet$ and $\singlet$  into  affine vertex algebras and minimal $\mathcal W$--algebras (cf. \cite{A-2005}, \cite{CRW}).   The case $p=3$ was alredy discussed in   the proof of Theorem \ref{conj-proof-23}.
A similar arguments can be repeated  in the $p=4$ case. 
For $p=4$ one  can show that 
\begin{equation} \label{gen}
  \qquad a^+ \otimes e^{\varphi}, \ \  a^- \otimes e^{-\varphi} 
  \end{equation}
generate the simple  vertex algebra $\mathcal W_3 ^{(2)}$ with central charge $-23/2$ whose vacuum space is generated by $a^{+}$ and $a^{-}$. 
 By  using the realization from \cite{CRW}, we get that $a^{+}$ and $a^{-}$ generate the doublet vertex algebra $\mathcal A(4)$. So  Conjecture  \ref{conjecture-deformed} also holds for $p=4$.
For $p \ge 5$, one needs to prove that (\ref{gen}) define the Feigin-Semikhatov $W$--algebra $W_p ^{(2)}$ embedded  into $\mathcal A(p) \otimes F_{-p/2}$.  But this calculation is much more complicated.


 \subsection{ Deformed action for $p=2$}

Let now $p=2$.  Then the singlet vertex algebra $\singlet$ is isomorphic to a subalgebra of $V_L$ generated by
\begin{align*}
\omega & = \frac{1}{8}  \alpha(-1) ^2 + \frac{1}{4} \alpha (-2)   + \frac{1}{2} e ^{-2\alpha_2}, \\ 
H & = S e^{-\alpha} = S_3 (\alpha)  + (\alpha_1 (-1) - \alpha_2 (-2) ) e^{-2\alpha_2}. 
\end{align*}

 Let $r, s  \in {\Bbb C}$. Consider  the  $ \singlet $--module $$\mathcal F (r, s) := \singlet. v_{r,s} , \quad v_{r,s}=e ^{\tfrac{ r}{3 } \alpha_1 + s  \alpha_2}.$$
We have:
\bea L(0) v_{r.0} &=&   h_{r+1, 1} v_{r,0}, \nonumber \\    L(0) v_{r,-1} &=& h_{r ,1} v_{r,-1}  \nonumber  \\ L(0) v_{r,1} &=& h_{r+ 2 ,1} v_{r,1}  + \frac{1}{2} v_{r, -1} , \nonumber  \\
L(0) (v_{r,1} -  \frac{1}{2  h_{r,1} } v_{r,-1} ) & = & h_{r+2,1} v_{r,1},   \nonumber \\
 H(0) v_{r.0} &=& {r \choose 3 } v_{r,0} ,\nonumber \\
 H(0) v_{r,-1} & = & { r-  1  \choose 3 } v_{r,-1} ,\nonumber \\
 H(0) v_{r,1} & = & { r+  1  \choose 3} v_{r, 1} + (r-1)  v_{r,-1} ,   \nonumber 
\eea 
where
$$ h_{r ,1} = \frac{( r-1)^2 }{8} - \frac{(r-1) }{4} =\frac{ (r-2) ^2 - 1}{ 8} . $$

\begin{proposition} \label{log-mod}
\item[(1)] Assume that $r \notin \Z $. Then
$$ \mathcal P_r:=\singlet  . v_{r,1} = \singlet . v^ + _{r,1} \bigoplus \singlet . v^ -  _{r,-1} $$
where
$$ v ^ + _{r,1} = v_{r,1} + \frac{p-1}{p} \frac{1}{h_{r+2,1} - h_{r,1} } v_{r,-1} = v_{r,1} + \frac{1}{r-1} v_{r,-1}, \  v^-   _{r,1}  = v_{r,-1}$$
are highest weight vectors  for $\singlet  $ of $(L(0), H(0))$ weight $$ \left(h_{r+2,1},  { r+  1  \choose 3 }\right), \quad  \left(h_{r,1},  { r-  1  \choose 3 }\right) $$
respectively. In particular,
$$  \mathcal P_r = M_{r+1} \bigoplus M_{r-1}. $$
\item[(2)] Assume that $r=p-1 =1$. Then
$ \mathcal P_{1} :=\singlet . v_{1,1}$ is a $\Z_{\ge 0}$-graded logarithmic  $\singlet$--module. The lowest component $\mathcal P_{1}(0)$ is $2$--dimensional,  spanned by $v_{1,1}$ and $v_{1,-1}$. For $r \in \mathbb{Z} \setminus \{0\}$, see also Remark \ref{log-int}.
\end{proposition}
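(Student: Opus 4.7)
The plan is to work entirely inside the deformed realization of Theorem~\ref{conj-proof-23}, where $\singlet$ sits in the rank-two lattice vertex algebra $V_L$ with conformal vector $\omega = \omega' + \tfrac{1}{2}e^{-2\alpha_2}$ and weight-three generator $H$ whose displayed formula splits as a pure Heisenberg Schur polynomial $S_3(\alpha)$ plus a screening-deformation piece proportional to $e^{-2\alpha_2}$. The key structural observation is that every generator of $\singlet$ either preserves the $\alpha_1$-coefficient (the Heisenberg parts) or shifts the $\alpha_2$-coefficient by $-2$ (the $e^{-2\alpha_2}$ corrections), so $\mathcal{P}_r = \singlet\cdot v_{r,1}$ is contained in the $\alpha_1$-coset of $\lambda = (r/3)\alpha_1+\alpha_2$ and contains in particular $v_{r,-1}$. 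A direct free-field computation then produces the displayed $(L(0), H(0))$-action on $v_{r,\pm 1}$: the Heisenberg parts $\omega'$ and $S_3(\alpha)$ give the diagonal eigenvalues $h_{r\pm 2,1}$ and $\binom{r\pm 1}{3}$, while the pole-of-order-two expansion $Y(e^{-2\alpha_2},z)v_{r,1} = \epsilon\,z^{-2}E^-(-2\alpha_2,z)v_{r,-1}$ together with the regularity of $Y(e^{-2\alpha_2},z)v_{r,-1}$ supplies the off-diagonal $\tfrac{1}{2}v_{r,-1}$ in $L(0)v_{r,1}$; an analogous expansion for the deformation piece of $H$ supplies the $(r-1)v_{r,-1}$ correction in $H(0)v_{r,1}$.

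For part~(1), the hypothesis $r\notin\Z$ gives $h_{r+2,1}\neq h_{r,1}$, so $L(0)$ is diagonalizable on the two-dimensional span of $v_{r,\pm 1}$; solving the eigenvalue equation yields $v^+_{r,1} = v_{r,1} + \tfrac{1}{r-1}v_{r,-1}$ with the coefficient arising from $(p-1)/p\cdot 1/(h_{r+2,1}-h_{r,1})$, and the binomial identity $\binom{r+1}{3}-\binom{r-1}{3}=(r-1)^2$ ensures that $v^+_{r,1}$ is simultaneously an $H(0)$-eigenvector with eigenvalue $\binom{r+1}{3}$, while $v^-_{r,1}=v_{r,-1}$ already is. To promote $v^\pm$ to singular vectors I must verify $L(n)v^\pm = H(n)v^\pm = 0$ for $n\geq 1$. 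The $L(n)$ case is immediate: $L'(n)v_{r,\pm 1}=0$ by the Heisenberg lowest-weight property, and the expansions above show $(e^{-2\alpha_2})_{n+1}v_{r,\pm 1}=0$ for $n\geq 1$. The $H(n)$ case reduces to the vanishing of positive modes of $S_3(\alpha)$ (standard Heisenberg argument) plus a pole-order count for the deformation piece. With $v^\pm$ established as singular vectors of weights $\lambda_{r+1}$ and $\lambda_{r-1}$, they generate highest-weight submodules; because $r\pm 1\notin\Z$ the irreducibles $M_{r\pm 1}$ coincide with their Virasoro-irreducible Fock realizations (cf.\ Section~\ref{fusion-rules-p2} and \cite{IK}), so $\singlet\cdot v^\pm \cong M_{r\pm 1}$. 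The distinct top $L(0)$-eigenvalues force $\singlet\cdot v^+\cap\singlet\cdot v^-=0$, and since $v_{r,1}=v^+ - \tfrac{1}{r-1}v^-$ lies in the sum, the direct sum fills all of $\mathcal{P}_r$.

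For part~(2), at $r=1$ one has $h_{3,1}=h_{1,1}=0$ and the coefficient $\tfrac{1}{r-1}$ diverges, so $L(0)$ is forced into a non-trivial Jordan block of size $2$: the direct computation $L(0)v_{1,1}=\tfrac{1}{2}v_{1,-1}$, $L(0)v_{1,-1}=0$ cannot be diagonalized, and hence $\mathcal{P}_1$ is a logarithmic $\singlet$-module. The $\Z_{\geq 0}$-grading by generalized $L(0)$-eigenspaces is inherited from the $L'(0)$-grading: the Heisenberg lowest weight vectors reachable from $v_{1,1}$ are exactly $v_{1,1-2k}$ for $k\geq 0$, with $L'(0)$-eigenvalues $\tfrac{k(k-1)}{2}\in\Z_{\geq 0}$, and their Heisenberg descendants contribute additional non-negative integers. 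The lowest component $\mathcal{P}_1(0)$ is the generalized $L(0)$-eigenspace at $0$, which by the same pole-order analysis as in part~(1) is spanned precisely by $v_{1,1}$ and $v_{1,-1}$.

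The principal computational hurdle is the verification that $H(n)v_{r,\pm 1}=0$ for all $n\geq 1$: the pure Heisenberg piece $S_3(\alpha)$ is handled by the standard Heisenberg lowest-weight argument, but the deformation piece involving $e^{-2\alpha_2}$ requires expanding a composite vertex operator on a non-trivial Fock vector together with a careful pole-order count, and this is the computational heart of the proof.
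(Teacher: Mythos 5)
Your proposal is correct and follows essentially the same route as the paper, whose proof is a one-line appeal to the displayed $(L(0),H(0))$-action on $v_{r,\pm 1}$, the irreducibility of $M_t$, and the Jordan block at $r=1$; your version simply spells out the diagonalization, the coefficient $\tfrac{1}{r-1}$, the binomial identity $\binom{r+1}{3}-\binom{r-1}{3}=(r-1)^2$, and the singular-vector checks that the paper leaves implicit.
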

\begin{proof}
Parts (1) and (2) follow immediately due to irreducibility of $M_t$  and because $L(0)$ forms a Jordan block on the top component, respectively. 
\end{proof}

\begin{remark}
In \cite{AdM-2009}, the authors presented an explicit realization of certain logarithmic modules for $\triplet$. Our  deformed realization of $\triplet$ gives an alternative realization of these logarithmic modules. We plan to return to these realizations in our forthcoming publications.
\end{remark}

\section{Intertwining operators between typical $\singlet$ and $\triplet ^{A_m}$--modules: the $p=2$ case}

In this section we fix $p=2$.
\begin{theorem}
\item[(1)] Assume that $t_1, t_2, t_1 + t_2 \notin {\Z}$. Then in the category of $\singlet$--modules, there exists non--trivial  intertwining operators of types
$${ M_{t_1+t_2} \choose M_{t_1} \ \ M_{t_2} }, \quad { M_{t_1+t_2-2} \choose M_{t_1} \ \ M_{t_2} }. $$
In particular, the following fusion rules holds:
$$ M_{t_1} \times M_{t_2} = M_{t_1+t_2} + M_{t_1 + t_2 -2}. $$
\end{theorem}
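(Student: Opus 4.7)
The strategy combines the upper bound from Theorem \ref{tipicni-kriterij} with two explicit constructions via the deformed realization of Section 4. Under $t_1,t_2,t_1+t_2\notin\Z$ the theorem bounds $\dim I\binom{M_s}{M_{t_1}\ M_{t_2}}\le 1$ and shows this can be non-zero only for $s\in\{t_1+t_2,\,t_1+t_2-2\}$; thus it suffices to construct one non-trivial intertwining operator in each of the two slots, after which the fusion rule follows immediately.

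For $\mathcal Y_1\in I\binom{M_{t_1+t_2}}{M_{t_1}\ M_{t_2}}$ I use the direct realization $M_{t_i}\cong\mathcal F(t_i,0)=\singlet\cdot v_{t_i,0}$ inside $V_L$, valid for typical $t_i$ by matching $(L(0),H(0))$-weights on $v_{t_i,0}=e^{(t_i/3)\alpha_1}$ together with Virasoro irreducibility of $M_{t_i}$. Setting $\mathcal Y_1(u,z)w:=Y(u,z)w$ with $Y$ the ambient lattice vertex operator on $V_L$, additivity of the $(\alpha_1,\alpha_2)$-charges forces the image into $\mathcal F(t_1+t_2,0)\cong M_{t_1+t_2}$, and the leading coefficient $\epsilon\,z^{t_1t_2/3}v_{t_1+t_2,0}$ is non-zero. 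Since $\singlet$ is embedded in $V_L$ via the deformed $\omega$ and $H$ of Section 4, this restriction is automatically a $\singlet$-intertwining operator.

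For $\mathcal Y_2\in I\binom{M_{t_1+t_2-2}}{M_{t_1}\ M_{t_2}}$ I exploit the alternative realization of $M_{t_1}$ made available by the deformation. By Proposition \ref{log-mod}(1), $\mathcal P_{t_1-1}=\singlet\cdot v_{t_1-1,1}\cong M_{t_1}\oplus M_{t_1-2}$, with the $M_{t_1}$-summand generated by $v^+_{t_1-1,1}=v_{t_1-1,1}+\frac{1}{t_1-2}v_{t_1-1,-1}$; let $\iota\colon M_{t_1}\hookrightarrow\mathcal P_{t_1-1}\subset V_L$ denote this embedding and keep $M_{t_2}\cong\mathcal F(t_2,0)$. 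Because the image of $Y(\iota(\cdot),z)(\cdot)$ lies in the $\singlet$-submodule $\mathcal P_{t_1+t_2-1}\cong M_{t_1+t_2}\oplus M_{t_1+t_2-2}$ of $V_L$, we may compose with the $\singlet$-linear projection $\pi\colon \mathcal P_{t_1+t_2-1}\twoheadrightarrow M_{t_1+t_2-2}$ to define
\begin{align*}
\mathcal Y_2(u,z)w:=\pi\bigl(Y(\iota(u),z)w\bigr),
\end{align*}
which is a $\singlet$-intertwining operator of the declared type. Non-triviality reduces to a leading-coefficient calculation: the two pieces $Y(v_{t_1-1,1},z)v_{t_2,0}$ and $\frac{1}{t_1-2}Y(v_{t_1-1,-1},z)v_{t_2,0}$ of $Y(\iota(v_{t_1}),z)v_{t_2,0}$ both start at $z^{(t_1-1)t_2/3}$ with equal lattice cocycle constants, and rewriting $v_{t_1+t_2-1,1}=v^+_{t_1+t_2-1,1}-\frac{1}{t_1+t_2-2}v_{t_1+t_2-1,-1}$ before applying $\pi$ kills the $v^+_{t_1+t_2-1,1}$ contribution and leaves a coefficient proportional to $\frac{1}{t_1-2}-\frac{1}{t_1+t_2-2}$ times $v_{t_1+t_2-1,-1}$, which is non-zero since $t_2\ne 0$.

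The hard part is the construction of $\mathcal Y_2$. The alternative embedding $\iota$ and the splitting $\mathcal P_{t_1-1}=M_{t_1}\oplus M_{t_1-2}$ of Proposition \ref{log-mod} both rely on the deformation $\omega=\omega'+\tfrac{1}{2}e^{-2\alpha_2}$ and the modified screening $S$ from Section 4; without these the second fusion channel $M_{t_1+t_2-2}$ (whose existence is forced on the Zhu-algebra level by Theorem \ref{tipicni-kriterij}) has no explicit realization by intertwining operators. The principal technical points are verifying that the lattice vertex operator output does lie in $\mathcal P_{t_1+t_2-1}$ so that $\pi$ is defined, and that the projected leading coefficient does not cancel under the typicality assumptions.
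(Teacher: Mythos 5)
Your overall strategy is the paper's: combine the Zhu--bimodule upper bound of Theorem \ref{tipicni-kriterij} with explicit Fock-space intertwining operators in the deformed realization, and detect the two channels by a leading-coefficient computation with $v^{\pm}_{t_1+t_2-1,1}$. The paper, however, arranges the data differently: it uses a \emph{single} ambient intertwining operator with the undeformed realization $M_{t_1}=\singlet.v_{t_1,0}$ in the \emph{first} slot and the shifted realization $M_{t_2}=\singlet.v^+_{t_2-1,1}\subset M_{\alpha_1}(1,\tfrac{t_2-1}{3}\alpha_1)\otimes V_{\alpha_2+\Z 2\alpha_2}$ in the second, so that the output lands in $\mathcal P_{t_1+t_2-1}=M_{t_1+t_2}\oplus M_{t_1+t_2-2}$ and both channels are read off from the single vector $w=v^+_{t_1+t_2-1,1}+\tfrac{t_1}{(t_2-2)(t_1+t_2-2)}v^-_{t_1+t_2-1,1}$.

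This asymmetry is not cosmetic, and it is where your construction of $\mathcal Y_2$ has a genuine gap: you never verify the $L(-1)$-derivative property for the \emph{deformed} conformal vector $\omega=\omega'+\tfrac12 e^{-2\alpha_2}$. The ambient lattice operator only satisfies $Y(L'(-1)u,z)=\tfrac{d}{dz}Y(u,z)$, so one must check $\mathcal Y(L(-1)u,z)=\tfrac{d}{dz}\mathcal Y(u,z)$ with $L(-1)=L'(-1)+\tfrac12 e^{-2\alpha_2}_0$ on a generator of the first input and then propagate by cyclicity. In the paper's arrangement this is immediate because $e^{-2\alpha_2}_0 v_{t_1,0}=0$ (the pairing $\langle -2\alpha_2,\tfrac{t_1}{3}\alpha_1\rangle=0$ kills all nonnegative modes). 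In your $\mathcal Y_2$ the first slot carries the shifted generator $v^+_{t_1-1,1}=v_{t_1-1,1}+\tfrac{1}{t_1-2}v_{t_1-1,-1}$, and since $\langle -2\alpha_2,\alpha_2\rangle=-2$ one gets $e^{-2\alpha_2}_0 v_{t_1-1,1}\propto \alpha_2(-1)v_{t_1-1,-1}\neq 0$; hence $L(-1)v^+\neq L'(-1)v^+$ and the required identity $\pi Y\bigl(e^{-2\alpha_2}_0 v^+_{t_1-1,1},z\bigr)\big|_{M_{t_2}}=0$ is a nontrivial claim you neither state nor prove. Without it, $\mathcal Y_2$ as defined is not known to be an intertwining operator. (The cleanest fixes are either to mimic the paper's slot assignment, or to obtain your type by skew-symmetry from an operator with the undeformed module in the first slot.) A secondary, more easily repaired point: for $\mathcal Y_1$, ``charge additivity'' does not by itself force the image into $\mathcal F(t_1+t_2,0)$, since $\omega$ and $H$ themselves carry $e^{-2\alpha_2}$ components and the ambient target $M_{\alpha_1}(1,\tfrac{t_1+t_2}{3}\alpha_1)\otimes V_{\Z 2\alpha_2}$ is a much larger $\singlet$-module; you still need Theorem \ref{tipicni-kriterij} together with complete reducibility (or an explicit projection) to conclude, which is exactly how the paper argues.
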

\begin{proof} In our realization, $\singlet$ is a vertex subalgebra of the vertex algebra $U:=M_{\alpha_1 } (1) \otimes V_{\Z  2 \alpha_2}$, where $M_{\alpha_1 } (1)$ is the Heisenberg vertex algebra, generated by $\alpha_1$, and $V_{\Z  2 \alpha_2}$ is the lattice vertex algebra associated to the even lattice $\Z (2 \alpha_1)$.
First we notice that in the category of $U $--modules we have a non-trivial intertwining operator $\mathcal Y$ of type
$$ { M_{\alpha_1} (1,  \frac{t_1 + t_2-1}{3}  \alpha_1)  \otimes V_{\alpha_2 + \Z  2 \alpha_2 }   \choose  M_{\alpha_1} (1,  \frac{t_1 }{3}\alpha_1 )  \otimes V_{ \Z  (2 \alpha_2 ) }  \ \ M_{\alpha_1} 
(1,  \frac{t_2-1 }{3} \alpha_1 ) \otimes V_{\alpha_2 + \Z  (2 \alpha_2)  }    }. $$
Clearly,  \bea  M_{t_1} &=& \singlet. v_{t_1,0}  \subset M_{\alpha_1} (1,  \frac{t_1 }{3}\alpha_1 )  \otimes V_ { \Z  (2 \alpha_2 )}  \nonumber \\
M_{t_2} &=& \singlet. v ^+ _{t_2-1, 1} \subset M_{\alpha_1} (1,  \frac{t_2-1 }{3}\alpha_1 )  \otimes V_{\alpha_2 + \Z  2 \alpha_2 }  \nonumber \\
M_{t_1 + t_2} &=& \singlet. v ^+ _{t_1+ t_2-1, 1} \subset M_{\alpha_1} (1,  \frac{t_1+ t_2-1 }{3}\alpha_1 )  \otimes V_{\alpha_2 + \Z  2 \alpha_2 }  \nonumber \\ 
M_{t_1 + t_2-2} &=& \singlet. v ^- _{t_1+ t_2-1, 1} \subset M_{\alpha_1} (1,  \frac{t_1+ t_2-1 }{3}\alpha_1 )  \otimes V_{\alpha_2 + \Z  2 \alpha_2 }  \nonumber 
\eea
By abusing the notation, let $\mathcal{Y}$ denotes the restriction to $M_{t_1}$ as defined above. We have to first 
check that $\mathcal{Y}$ satisfies the $L(-1)$-property for the new Virasoro generator $L(-1)=L'(-1)+\frac{1}{2}e^{-2 \alpha_2}_{0}$. 
Observe that $\mathcal{Y}(L(-1)v_{t_1,0})=\mathcal{Y}(L'(-1)v_{t_1,0},x)=\frac{d}{dx} \mathcal{Y}(v_{t_1,0},x)$.
For other vectors, the $L(-1)$ property follows from the fact that $M_{t,1}$ is a cyclic module.

By using Theorem  \ref{tipicni-kriterij}   and the fact that $M_{t_1}.M_{t_2}$ is completely reducible, we see that the corresponding fusion product  (\ref{fusion-product}), defined via the intertwining operator $\mathcal Y$, must satisfy  $M_{t_1} \cdot M_{t_2} \subset M_{t_1+t_2} \oplus M_{t_1+t_2-2}$. 
By construction, we have that \bea w&=& v  _{t_1+ t_2-1, 1} + \frac{1}{t_2-2} v  _{t_1+ t_2-1, -1}  \nonumber \\
&=&   v ^{+} _{t_1+ t_2-1, 1} + \frac{t_1}{(t_2-2)(t_1 + t_2 -2)} v ^{-} _{t_1+ t_2-1, 1} \in M_{t_1} \cdot M_{t_2}, \nonumber \eea
which implies that  singular vectors $v ^{\pm} _{t_1+ t_2-1, 1}$ belong to $M_{t_1} \cdot M_{t_2} $. So  we get that $M_{t_1} \cdot M_{t_2} = M_{t_1+t_2} \oplus M_{t_1+t_2-2}$ as desired.
%
 \end{proof}

\begin{remark} \label{log-int}
One can show that for $t_1,t_2 \notin \mathbb{Z}$, $t_1 + t_2 \in {\Z}$, our free field realizations in Proposition \ref{log-mod} also gives the existence of  a logarithmic intertwining operator of type
$$ { \mathcal P_{t_1+ t_2} \choose M_{t_1} \ \ M_{t_1} }. $$
Moreover, observe that the condition $t_1,t_2 \notin \mathbb{Z}$ is not necessary here and we may assume 
$t_1, t_2 \in \{ 4n+1, 4n-1 : n \in \mathbb{Z} \}$.  This leads to a new family of intertwining operator presumably obtained by the restriction 
of an intertwining operator coming from a triple of symplectic fermions modules. We shall study  these intertwining operators in more details in our future
publications.
\end{remark}


We have similar result in the category of $\triplet ^{A_m}$--modules:
\begin{theorem}
Assume that $i_1, i_2,  i_3 \in \{0, \dots, 2pm^2  -1\} $ $i_1, i_2,  i_1 + i_2\notin  m{\Z}$, where $m$ is a positive integer. Then in the category of $\triplet ^{A_m}$--modules, there exists non--trivial  intertwining operators of type
$${ L_{i_3/m}  \choose L_{i_1/m} \ \ L_{i_2/m} }  $$
if and only if $$ i_3 \equiv i_1 + i_2 \ \   \mbox{mod} (2m^2 p)\qquad \mbox{or} \ \ i_3 \equiv i_1 + i_2  - 2m \ \   \mbox{mod}( 2m^2  p). $$
\end{theorem}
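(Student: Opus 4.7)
The strategy is to reduce the statement for the orbifold $\triplet^{A_m}$ to the singlet fusion rules from the preceding theorem via the $\singlet$-module decomposition $L_{i/m} = \bigoplus_{n \in \Z} M_{i/m - 2pmn}$ of Theorem \ref{class-alm1}, together with the simple current property of Theorem \ref{SC}.

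For the ``only if'' direction, let $\mathcal Y$ be a nontrivial intertwining operator of type ${L_{i_3/m} \choose L_{i_1/m}\ L_{i_2/m}}$. Since $L_{i_1/m}$ and $L_{i_2/m}$ decompose as direct sums of $\singlet$-modules, there exist indices $n_*,\ell_*$ such that the restriction of $\mathcal Y$ to $M_{i_1/m - 2pmn_*} \otimes M_{i_2/m - 2pm\ell_*}$ is nonzero; this restriction is a nontrivial $\singlet$-intertwining operator because the Jacobi identity is inherited from $\singlet \subset \triplet^{A_m}$ and these singlet summands are $\singlet$-invariant. Set $t_1 = i_1/m - 2pmn_*$, $t_2 = i_2/m - 2pm\ell_*$. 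The hypothesis $i_1, i_2, i_1+i_2 \notin m\Z$ gives $t_1, t_2, t_1+t_2 \notin \Z$, so the preceding singlet fusion theorem confines the image to $M_{t_1+t_2} \oplus M_{t_1+t_2-2}$. For this image to project nontrivially into $L_{i_3/m} = \bigoplus_k M_{i_3/m - 2pmk}$, at least one of these two singlet modules must coincide with some $M_{i_3/m - 2pmk}$; rewriting in terms of the $i_j$ and absorbing the shifts $n_*, \ell_*$ (which are trivial modulo $2pm^2$) yields precisely the two congruences $i_3 \equiv i_1+i_2 \pmod{2pm^2}$ or $i_3 \equiv i_1+i_2-2m \pmod{2pm^2}$.

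For the ``if'' direction, I would construct the required intertwining operator by assembling singlet-level operators. Pick $\delta \in \{0, 2m\}$ to match the holding congruence. For each pair $(n,\ell) \in \Z^2$, the preceding singlet fusion theorem supplies a nonzero $\singlet$-intertwining operator
$$
\mathcal Y_{n,\ell} \colon M_{i_1/m - 2pmn} \otimes M_{i_2/m - 2pm\ell} \longrightarrow M_{(i_1+i_2-\delta)/m - 2pm(n+\ell)},
$$
and Theorem \ref{SC} says it is unique up to scalar via the simple-current fusion rule $\pi_{2m} \times M_t = M_{t-2pm}$. Normalizing the $\mathcal Y_{n,\ell}$ consistently with this simple-current action, I would sum them to form $\mathcal Y = \sum_{n,\ell} \mathcal Y_{n,\ell}$ from $L_{i_1/m} \otimes L_{i_2/m}$ into $L_{i_3/m}((x))$. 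The $L(-1)$-derivative property and the Jacobi identity for elements of $\singlet$ then reduce to the corresponding properties of each individual $\mathcal Y_{n,\ell}$.

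The main obstacle is checking compatibility of the normalizations with the Jacobi identity for the extra generators $F^{(m)} = e^{-m\alpha}$ and $E^{(m)} = Q^{2m} F^{(m)}$ of $\triplet^{A_m}$ over $\singlet$. This amounts to trivializing a $2$-cocycle-type obstruction intrinsic to the simple current extension $\singlet \subset \triplet^{A_m}$; its vanishing is forced by associativity of $\singlet$-fusion together with the uniqueness statement from Theorem \ref{SC}, but concretely verifying it requires careful bookkeeping of the lattice cocycle that appears in the free field realization underlying the previous section's construction.
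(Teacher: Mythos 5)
The paper states this theorem with no proof at all --- it is offered as the orbifold analogue of the preceding singlet result --- so your proposal is being measured against an argument the authors left implicit. Your ``only if'' direction is correct and essentially forced: restricting a nonzero $\triplet^{A_m}$-intertwining operator to a pair of singlet summands $M_{t_1}\otimes M_{t_2}$ of $L_{i_1/m}\otimes L_{i_2/m}$ and projecting onto a summand of $L_{i_3/m}$ yields a nonzero $\singlet$-intertwining operator (the Jacobi identity and $L(-1)$-property restrict, and the summands are $\singlet$-stable); the hypothesis $i_1,i_2,i_1+i_2\notin m\Z$ gives $t_1,t_2,t_1+t_2\notin\Z$, so Theorem \ref{tipicni-kriterij} together with the preceding theorem pins the target to $M_{t_1+t_2}$ or $M_{t_1+t_2-2}$, and clearing denominators gives exactly the two congruences modulo $2pm^2$. (Note the whole section has $p=2$; the statement should be read accordingly, since the singlet fusion input is only proved there.)

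The gap is in the ``if'' direction. Summing independently normalized singlet operators $\mathcal Y_{n,\ell}$ and then verifying the Jacobi identity for the extra generators $F^{(m)}=e^{-m\alpha}$ and $E^{(m)}$ a posteriori is precisely the hard step, and your appeal to ``associativity of $\singlet$-fusion plus uniqueness'' does not close it: associativity of fusion for the irrational algebra $\singlet$ is not established anywhere in this paper, so it cannot be invoked to trivialize the cocycle obstruction, and you concede that what remains is the lattice-cocycle bookkeeping. The route consistent with the paper's machinery avoids the gluing entirely: in the deformed realization of Section 4, $\triplet^{A_m}$ and all of the modules $L_{i/m}$ sit inside a (generalized) lattice vertex algebra built on $L=\Z\alpha_1+\Z\alpha_2$ and its modules, so one takes the Dong--Lepowsky intertwining operator among the ambient lattice modules and restricts it; the restriction then satisfies the Jacobi identity for \emph{every} element of $\triplet^{A_m}$ for free, and nontriviality in both channels $i_3\equiv i_1+i_2$ and $i_3\equiv i_1+i_2-2m$ follows from the same singular-vector computation (the vector $w=v^+_{t_1+t_2-1,1}+c\,v^-_{t_1+t_2-1,1}$) used in the singlet proof. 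If you insist on assembling the operator summand by summand, you must actually exhibit the compatible normalizations --- which again comes down to the lattice cocycle, i.e.\ to the free-field realization you were trying to bypass.
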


\begin{remark} Results in this paper, together with Remark \ref{log-int},  provides a rigorous proof of the main conjecture in \cite{CM} pertaining to fusion rules of 
$\singlet$ for $p=2$. 

Almost all results in the paper can be extended for the $N=1$ singlet/triplet vertex algebras introduced and studied by the authors \cite{AdM-CMP}
(work in progress).
\end{remark}

\end{document}